\theoremstyle{plain}
\newtheorem{Theorem}{Theorem}[section]
\newtheorem{Proposition}[Theorem]{Proposition}
\newtheorem{Conjecure}[Theorem]{Conjecture}
\theoremstyle{definition}
\newtheorem{Definition}[Theorem]{Definition}
\theoremstyle{Remark}
\newtheorem{Remark}{Remark}
\newcommand{\rar}{\rightarrow}
\newcommand{\lar}{\longrightarrow}
\newcommand{\surjects}{\twoheadrightarrow}
\def\fm{{\mathfrak m}}
\def\hht{{\rm ht}\,}
\def\cl#1{{\mathcal #1}}
\def\VaVa{{\cl V}\kern-5pt {\cl V}}
\def\pp{{\mathbb P}}
\def\X{{{\mathbb X}}}
\begin{document}

\title[ Valabrega-Valla of the Jacobian ideal of points ]{ The module of Valabrega-Valla of the Jacobian ideal of points in projective  plane}
\author[A. Nasrollah Nejad \,  Z. Shahidi ]{  abbas Nasrollah nejad  \,  Zahra Shahidi}
\address{ Department of Mathematics, Institute for Advanced Studies in Basic Sciences (IASBS), Zanjan, Iran.}
\email{abbasnn@iasbs.ac.ir \,\,    z.shahidi@iasbs.ac.ir}
\subjclass[2010]{primary 13A30, 13C12, 14H20; secondary 14N20, 14C17}
\keywords{Blowup algebra, Jacobian Ideal, Valabrega-Valla module, Ideal of Points}
\begin{abstract}
The module of Valabrega-Valla of the Jacobian ideal of a  reduced projective variety $V$ is the torsion of the Aluffi algebra. One considers the problem of its vanishing in the case of where $V$ is a reduced set of points in the projective plane. It is shown that the module is nonzero for several cases of a special configuration class therein -- called $(s-r)$-{fold collinear configuration}. A complete classification of types is given for $5$ and $6$ points in regard to this problem. 
\end{abstract}
\maketitle
\section*{Introduction}
Let $V\subset \pp^d=\pp^d_k$ denote a reduced projective variety over a perfect field $k$, with homogeneous coordinate ring $A=R/J$, where $R=k[x_0,\ldots,x_d]$ and $J$ is the homogeneous defining ideal of $V$ in its embedding. 
By definition, the {\em Jacobian ideal} of the $k$-algebra $A$ is the Fitting ideal of order $d+1-(d-\dim V)=\dim V+1$ of the module of K\"ahler $k$-differentials $\Omega_{A/k}$ of $A$.
Since $\Omega_{A/k}$ depends only on $A$, and not on any particular presentation of $A$, and so do the Fitting ideals of $\Omega_{A/k}$, then this notion is dependent only upon the given projective embedding of $V$.
This is as much invariance one can dispose of.

Now, since $\Omega_{A/k}$ admits a module presentation as the cokernel of the transposed Jacobian matrix $\Theta$ of a set of generators of $J$, modulo $J$, then the Jacobian ideal of $A$ is the ideal of $A$ generated by the $(d-\dim V)$-minors of this Jacobian matrix.
By abuse, one often takes the ideal of $(d-\dim V)$-minors back in the polynomial ring $R$ as the Jacobian ideal of the ideal $J$. If this malpractice is followed then, in order to keep the invariant properties of the Jacobian ideal of $A$, one should always take those minors summed to $J$  -- that is, the ideal $I:= (J,I_{d-\dim V}(\Theta))\subset R$.


In this paper we focus on the problem as to when the so-called Valabrega--Valla module vanishes for the pair $J\subset I$, where $J$ is a homogeneous ideal in a polynomial ring over a field and $I$, denotes its Jacobian ideal.
For that a basic principle was established in \cite[Example 2.19]{AA} and successfully applied to a collection of classical geometric situations, such as the rational normal curve (codimension $d-1$) and certain Segre and Veronese embeddings.
A little later, this problem was applied in \cite{AR} for special classes of linear determinantal ideals (rational normal scrolls and alike).

In \cite{AAR} the problem was circumscribed to the case of  a set of reduced points when the latter admits a minimal set of generators in degree $2$. In \cite[Theorem 1.2]{AZR} a characterization of the vanishing of the module of Valabrega-Valla was given in terms of the first syzygy module of the form ideal $J^*$ in the associated graded ring of $I$. In \cite{AZ} a vast extension of these matters was taken all the way to the environment of modules, bringing up some hard features of Cohen--Macaulay and Gorenstein algebras naturally arising from Rees algebras of modules.

Given an inclusion of ideals $J\subset I$, the {\em module of Valabrega--Valla} is defined as the graded module 
\begin{equation*}\label{vava}
\VaVa_{J\subset I}:=\bigoplus_{t\geq 2} \frac{J\cap
	I^t}{JI^{t-1}}.
\end{equation*}
As such, it has appeared elsewhere  in a different context (see
\cite{VaVa}, also \cite[5.1]{Wolmbook1}).
As it turns out, provided $I$ has a regular element module $J$, the Valabrega--Valla module is the $A$-torsion of the so-called embedded Aluffi algebra of $I/J$, hence its interest for the geometric purpose~\cite{Thesis1}.
Dealing directly with the  Valabrega--Valla module makes the structure of the Aluffi algebra itself sort of invisible. On the bright side, in the case where $\mathfrak a=I/J$ is the Jacobian ideal of $A=R/J$, the heavy work is transferred to understanding its nature.
Besides, for some mysterious reason, the existence of non-trivial torsion is often delivered at the level of degree $2$ of ${\cl V}\kern-5pt {\cl V}_{J\subset I}$.

The Jacobian ideal $I/J\subset A:=R/J$ will be said to be {\em ${\cl V}\kern-5pt {\cl V}$-torsion free} if ${\cl V}\kern-5pt {\cl V}_{J\subset I}=\{0\}$, i.e., if $J\cap I^t=JI^{t-1}$ for all $t\geq 2$.

The main goal of this work is to understand the nature of the $\VaVa$-torsion freeness of the Jacobian ideal $I$ of an ideal $J\subset R:=k[x_0,\ldots, x_n]$ of a finite set of distinct  points in projective space. Note that, since $R/J$ is Cohen--Macaulay, $R/J$ is reduced if and only if $\hht (I)\geq \hht (J)+1$, which is equivalent to saying that $J$ is an ideal of reduced points if and only if $I$ contains a regular element modulo $J$ or, still, to assert that $I$ is an $\fm$-primary ideal, where $\fm=(x_0,\ldots, x_n)$.
As such, the only algebraic subtlety away from the geometric data is the number and the degrees of a set of minimal generators of the $\fm$-primary ideal $I$. Indeed, this emphasis will prevail throughout the paper. 

The results and examples in \cite{AAR} show that the answer to this problem may not have a general shape, even when one assumes that the defining ideal is generated by quadrics -- a condition that is only guaranteed when the number of points in $\pp^n$ is at most $2n$. In $\pp^2$ this forces the number of points to be at most $4$, not a very bright situation. The idea of this work is to go beyond, by rather focusing on special configuration of points in  the projective plane $\pp^2$, without imposing constraints on the number of points. However, we do consider the `next' cases: $5$ and $6$ points are completely classified. 

The reason to look at reduced plane points is that, along with the results of \cite{AA} on hypersurfaces, it would give a reasonable picture of the Jacobian torsion for the subvarietis of $\pp^2$.
The next difficult step would be the case of curves in $\pp^3$, a problem that we may tackle in the near future.
It is the authors expectation that new suitable techniques be brought in to get a bird view of the problem in all cases.

The outline of the paper is as follows. 

In section~\ref*{P1}, we give a quick view of the main characters for arbitrary ideals in a Noetherian ring (for further details we refer to \cite{AA}). Then proceed to the case of the defining ideal of a reduced set of points in projective space. In this regard, Proposition~\ref{ATFN} gives a criterion to check $\VaVa$-torsion freeness of the Jacobian ideal of a finite set of points in a projective space. 

In section~\ref{P2},  we consider the following configuration of points in $\pp^2$: given integers $s\geq 4$ and $0\leq r\leq s-3$, will say that a finite set of points $\X$ is an $(s-r)$-{\em fold collinear configuration} when $(s-r)$ among them lie on a straight line. We prove that if $r=0,1$, then the Jacobian ideal of the defining ideal $I(\X)$ of $\X$ is not $\VaVa$-torsion free (Theorem~\ref{collinear-colinear-1}). In the case that $r=2,\, s\geq 8$ or $r=3,\, s\geq 9$, we show that the Jacobian ideal of $\X$ is not $\VaVa$-torsion free. 
Clearly, all theses cases  have a simple geometry, which however does not seem to yield automatically the `asymptotic' behavior of the problem in consideration.  

In section~\ref{4-5-6points}  we consider the defining ideal $I(\X)$ of a set $\X$ of $5$  and $6$ points in $\pp^2$. In the case of $5$ distinct points it is known that  there are $5$ mutually distinct configurations. We show that the Jacobian ideal of $I(\X)$ is $\VaVa$-torsion-free exactly in the following configurations:
\begin{itemize}
	\item $\X$ is in general linear position.
	\item $\X$ is a $3$-fold collinear configuration.
	\item $\X$ is a $3$-fold collinear configuration such that the straight line through the remaining two points intersect in a point of $\X$. 
\end{itemize}
Finally, let $\X\subset \pp^2$ be a set of six points.
For $6$ points, there are eleven distinct configurations (see Figure ~\ref{6points Configuration}).
We show that the Jacobian ideal of $I(\X)$ is $\VaVa$-torsion-free if and only if $\X$ is in one of the following  configurations:
\begin{itemize}
	\item $\X$ is in  general linear position.
	\item $\X$ is  a $4$-fold collinear configuration.
	\item $\mathbb{X}$ is a $4$-fold  collinear configuration such that  the straight line through the remaining two points intersect in a point of $\X$. 
	\item $\X$ is  a $3$-fold collinear configuration such that  the remaining three points are in general linear position. 
	\item $\X$ is a $3$-fold collinear configuration such that  the straight line through two of  the  remaining  points  intersect in a point of $\X$.
	\item $\X$ is a $3$-fold collinear configuration such that  two straight lines through of  the  remaining  points  intersect in  points of $\X$.
	\item $\X$ is a $3$-fold collinear configuration such that the  three  straight lines through of  the remaining  points  intersect in  points of $\X$.
\end{itemize}
\section*{Acknowledgment}
{The second author thanks the Instituto de Ci\^encias Matem\'aticas e da Computa\c c\~ao (ICMC, S\~ao Carlos, Brazil) and the Department of Mathematics of the Federal University of Sergipe (UFS, Brazil) for providing a suitable atmosphere for her stay in the frame of a sabbatical leave. Both authors thank  Zaqueu Ramos and Aron Simis for insightful discussions on 	the preliminary versions of this paper. Simis, in particular, has been helpful in suggesting a couple of  improvements in the style of some proofs}

\section{The module of Valabrega-Valla of points}\label{P1}
Given a Noetherian ring $R$ and ideals $J\subset I$ of $R$, the module of Valabrega-Valla is defined as graded module
\begin{equation*}\label{vava}
\VaVa_{J\subset I}:=\bigoplus_{t\geq 2} \frac{J\cap
	I^t}{JI^{t-1}}.
\end{equation*} 
There is a natural surjective $R/J$-algebra homomorphism from the Aluffi algebra to the Rees algebra of $I/J$
\[\mathcal{A}_{R/J}(I/J)\simeq\bigoplus_{t\geq 0} I^t/JI^{t-1}\surjects \mathcal{R}_{R/J}(I/J)\simeq\bigoplus I^t/J\cap I^t. \]
The Aluffi algebra is an algebraic version of characteristic cycles in intersection theory~\cite{Thesis1}. The connection of module of Valabrega-Valla  with the Aluffi algebra is as follows:
\begin{Proposition}{\rm (\cite[Proposition 2.5]{AA})}\label{Aluffi_torsion}
	If $I$ has a regular element module $J$, then the module of  Valabrega--Valla  is the $R/J$-torsion of the  Aluffi algebra of $I/J$.
\end{Proposition}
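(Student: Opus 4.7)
The plan is to establish the short exact sequence
$$0 \lar \VaVa_{J\subset I} \lar \mathcal{A}_{R/J}(I/J) \lar \mathcal{R}_{R/J}(I/J) \lar 0$$
which is nearly tautological from the stated gradings $\mathcal{A}_{R/J}(I/J)=\bigoplus_t I^t/JI^{t-1}$ and $\mathcal{R}_{R/J}(I/J)=\bigoplus_t I^t/(J\cap I^t)$. Indeed, since $JI^{t-1}\subset J\cap I^t$, the natural surjection on each degree $t$ has kernel $(J\cap I^t)/JI^{t-1}$; this vanishes in degrees $0$ and $1$ because $J\subset I$ forces $J\cap I=J=JI^{0}$, while $J\cap I^0=J$ matches the convention $JI^{-1}=J$. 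Hence the total kernel is precisely $\VaVa_{J\subset I}$. The remaining task is to identify $\VaVa_{J\subset I}$ with $\mathrm{tor}_{R/J}\bigl(\mathcal{A}_{R/J}(I/J)\bigr)$, and I would do this by proving the two inclusions separately.

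For the inclusion $\mathrm{tor}_{R/J}(\mathcal{A}_{R/J}(I/J))\subseteq \VaVa_{J\subset I}$, the key is that $\mathcal{R}_{R/J}(I/J)$ is $R/J$-torsion-free. This is built in: by definition it embeds into the polynomial ring $(R/J)[T]$ as the graded subring $\bigoplus_t (I/J)^tT^t$, and any non-zero-divisor of $R/J$ remains regular on $(R/J)[T]$ and thus on its subring $\mathcal{R}_{R/J}(I/J)$. Applying $\mathrm{tor}_{R/J}(-)$ to the exact sequence above then forces the torsion of $\mathcal{A}_{R/J}(I/J)$ to sit inside $\VaVa_{J\subset I}$.

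For the reverse inclusion, I would argue directly. Let $\bar{x}\in (J\cap I^t)/JI^{t-1}$ be represented by some $x\in J\cap I^t$, with $t\geq 2$. By hypothesis there exists $a\in I$ whose class $\bar a\in R/J$ is a non-zero-divisor; since non-zero-divisors are closed under multiplication, $\bar a^{t-1}$ is also a non-zero-divisor in $R/J$. Because $x\in J$ and $a^{t-1}\in I^{t-1}$, one has $a^{t-1}x=x\cdot a^{t-1}\in J\cdot I^{t-1}=JI^{t-1}$, so $\bar a^{t-1}\cdot\bar x=0$ in the degree-$t$ piece $I^t/JI^{t-1}$ of $\mathcal{A}_{R/J}(I/J)$. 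Hence $\bar x$ is $R/J$-torsion, and the reverse inclusion follows.

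The only conceptual subtlety, and what I would flag as the one potential pitfall, is to keep clearly separated the two actions on $\mathcal{A}_{R/J}(I/J)$: the internal algebra multiplication (which would send $\bar a\cdot \bar x$ into degree $t+1$ and is not what we want) and the $R/J$-module structure coming from degree $0$ (which preserves degree and is what detects torsion). The trick $a^{t-1}x\in JI^{t-1}$ exploits precisely the degree-$0$ action, and that is where the hypothesis on $I$ enters. Everything else is bookkeeping with the definitions.
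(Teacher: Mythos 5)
Your proposal is correct and follows essentially the route the paper intends: it realizes the module of Valabrega--Valla as the kernel of the displayed surjection $\mathcal{A}_{R/J}(I/J)\surjects \mathcal{R}_{R/J}(I/J)$, identifies that kernel with the torsion by noting the Rees algebra is $R/J$-torsion-free, and uses the regular element $a\in I$ (via $a^{t-1}x\in JI^{t-1}$ for $x\in J\cap I^t$) for the reverse inclusion, which is exactly where the hypothesis enters. This matches the standard argument behind the cited result, so there is nothing to add.
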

The vanishing of $\VaVa_{J\subset I}$ has close relation with the theory of $I$-standard base ( in the sense of Hironaka), Artin Rees number and relation type number (See~\cite{FPV1} and \cite{AR}). 

\smallskip

Let $\X:=\{p_1,\ldots,p_s\}$ be a set of $s$  distinct points in $\pp^n:=\pp_k^n$, where $k$ is an algebraically closed field of characteristic zero and $n\geq 2$. The defining ideal of $\X$ is the ideal $I(\X)=\bigcap_{i=1}^sI(p_i)\subseteq R=k[x_0,\ldots,x_n ]$ where $I(p_i)$ is the prime ideal generated by $n$ linear forms. Note that the coordinate ring  $R/I(\X)$ of $\X$ is a reduced ring of dimension one and hence is a Cohen-Macaulay ring. The multiplicity of $R/I(\X)$ is the number of points  in $\X$ \cite[Corollary 3.10]{David}. Since $R/I(\X)$ is locally regular on the punctured spectrum, by the Jacobian criterion it translates into the property that the Jacobian ideal  $I=(I(\X),I_n(\Theta))$ is $\fm$-primary, where  $I_n(\Theta)$ is the ideal of $n$-minors of the  Jacobian matrix  $\Theta$ of $I(\X)$ and $\fm$ denotes the maximal irrelevant ideal of the polynomial ring $R$. In other words, there is a suitable power $\fm^l$ that lands into $I$. Therefore, if the defining ideal of $\X$ is minimally generated in single degree $m\geq 2$ and $\fm^{n(m-1)}\subset I$, then the Valabrega-Valla module vanishes~\cite{AAR}. 

Let us introduce the following terminology:

\begin{Definition}\rm
	The Jacobian ideal $I/J\subset A:=R/J$ will be said to be {\em ${\cl V}\kern-5pt {\cl V}$-torsion free} if ${\cl V}\kern-5pt {\cl V}_{J\subset I}=\{0\}$, i.e., if $J\cap I^t=JI^{t-1}$ for all $t\geq 2$.
\end{Definition}

Given a ring $A$ and an ideal $\mathfrak{a}=(a_1,\ldots,a_n)\subseteq A$, one lets $\varphi: A[T_1,\ldots,T_n]\surjects \mathcal{R}_A(\mathfrak{a})=A[\mathfrak{a}t]$ be the graded map sending $T_i$ to $a_it$. The relation type number of $\mathfrak{a}$ is the largest degree of any minimal system of homogeneous generators of the kernel $\varphi$. Since the isomorphism $A[T_1,\ldots,T_n ]/\mathrm{Ker}\varphi\simeq \mathcal{R}_A(\mathfrak{a})$ is graded, an application of the Schanuel lemma to the graded pieces shows that the notion is independent of the set of generators of $\mathfrak{a}$. 

A key result in the case of points reads like this:  
\begin{Proposition}\label{ATFN}
	Let $\X$ be a finite set of $s$ points in $\pp^n$ and $I$ stands for the Jacobian ideal of $I(\X)$.  Assume that $ I(\X) \cap I^t=I(\X)I^{t-1}$ for every $ 2 \leq t \leq s $. Then the Jacobian ideal of  $I(\X)$ is $\VaVa$-torsion-free. 
\end{Proposition}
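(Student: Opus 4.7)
The plan is to start from the hypothesis at $t=s$ and propagate the vanishing of $\VaVa_t$ to all $t \ge 2$ via a principal reduction of $I/J$ in $A:=R/J$.

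Since $A$ is a one-dimensional Cohen--Macaulay ring of multiplicity $s$ and $I/J$ is $\fm$-primary in $A$, the ideal $I/J$ admits a principal reduction: concretely, there exists $a \in I$, regular modulo $J$, and an integer $r \ge 0$ such that $aI^r + J = I^{r+1} + J$. The crucial technical point, which I would establish first, is that the reduction number $r$ can be taken to be at most $s-1$. Once that bound is in place, the modular law applied to $aI^{s-1} + J = I^s + J$ (using $aI^{s-1} \subseteq I^s$) gives
\[
I^s = aI^{s-1} + (J \cap I^s) = aI^{s-1} + JI^{s-1},
\]
where the second equality invokes the hypothesis for $t=s$. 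Multiplying by $I^{t-s}$ then yields $I^t = aI^{t-1} + JI^{t-1}$ for all $t \ge s$.

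With this identity in hand, I would close the argument by induction on $t$. The cases $2 \le t \le s$ form the base and are the hypothesis itself. For $t > s$, given $x \in J \cap I^t$, decompose $x = au + w$ with $u \in I^{t-1}$ and $w \in JI^{t-1}$. Since $au = x - w \in J$ and $a$ is regular modulo $J$, we obtain $u \in J \cap I^{t-1}$, which equals $JI^{t-2}$ by the hypothesis (if $t-1 \le s$) or by the inductive hypothesis (if $t-1 > s$). Hence $au \in a \cdot JI^{t-2} \subseteq JI^{t-1}$, so $x = au + w \in JI^{t-1}$. Combined with the trivial inclusion $JI^{t-1} \subseteq J \cap I^t$, this closes the induction.

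The main obstacle is the bound $r \le s-1$ on the reduction number. A generic estimate via the length of the Artinian quotient $A/(a)$ gives only $r \le \deg(a) \cdot s - 1$, so obtaining the sharp bound calls either for a judicious choice of the element $a$ inside $I$ or for exploiting the specific structure of the Jacobian ideal of a reduced set of points rather than treating $I/J$ as an arbitrary $\fm$-primary ideal.
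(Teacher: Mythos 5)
Your inductive mechanism for propagating the vanishing from degree $s$ to all higher degrees is sound: once one knows $I^t=aI^{t-1}+JI^{t-1}$ for $t\geq s$ with $a\in I$ regular modulo $J$, the decomposition $x=au+w$ and the regularity of $a$ modulo $J$ do force $J\cap I^t=JI^{t-1}$ by induction. The genuine gap is exactly the point you flag yourself: the bound $r\leq s-1$ on the reduction number of $I/J$ carries the whole weight of the argument, and you do not establish it, so the proposal as written is conditional at its load-bearing step. The bound is in fact true, but it is a theorem rather than an observation: for $A=R/I(\X)$ a one-dimensional Cohen--Macaulay graded ring of multiplicity $s$ over an infinite field and $\fa$ an $\fm$-primary ideal, one has $\mu(\fa^n)\leq \lambda(\fa^n/x\fa^n)=\lambda(A/xA)=s$ for every $n$, where $x$ is a general linear parameter; feeding this into the Eakin--Sathaye theorem with $n=s$ produces $y\in\fa$ with $\fa^s=y\fa^{s-1}$, i.e.\ reduction number at most $s-1$, and a principal reduction of an $\fm$-primary ideal in a one-dimensional Cohen--Macaulay ring is automatically a parameter, hence a nonzerodivisor. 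Without some such input, your generic estimate $r\leq \deg(a)\cdot s-1$ does not close the argument.

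For comparison, the paper sidesteps reductions entirely: it invokes \cite[Corollary 2.17]{AA}, which says that if the relation type of $I/I(\X)$ is at most $s$ then the Valabrega--Valla module is detected entirely in degrees $\leq s$, and then bounds the relation type by the multiplicity of the one-dimensional Cohen--Macaulay ring $R/I(\X)$ via Planas-Vilanova's \cite[Lemma 6.3]{FPV}. Both routes ultimately rest on a statement of the form ``in dimension one the multiplicity bounds an invariant of an $\fm$-primary ideal'' (the reduction number in your case, the relation type in the paper's); the paper discharges its version by citation, while yours still needs a proof or a reference.
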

\begin{proof}
	By \cite[Corollary 2.17]{AA} it suffices to prove that the relation type number of  $I/I(\X)$ is at most $s$.
	But as $R/I(\X) $ is a $1$-dimensional Cohen Macaulay graded ring, then \cite[Lemma 6.3]{FPV} implies that the relation type number of $ I/I(\X)$ is bounded by the multiplicity of $ R/I(\X)$ which is the number of points in $ \X$.
\end{proof}
\begin{Remark}\rm
	 (a) By Proposition ~\ref{ATFN}, to check the $\VaVa$-torsion-freeness it is enough to check for $ t \leq s$, where $ s$ is the number of points. This bound is not sharp in general -- see, e.g., Proposition~\ref{5-points}. 
	
	(b) Let $ \X$ be a set of $s$ collinear points with $ s\geq 4$ in $\pp^2$. Then the defining ideal of $\X$ is $ I(\X)=(z,f(x,y))$, where $ f(x,y)$ is a reduced product of $s$ linear forms in $x,y$.  Then the relation type of $I/I(\X)$ on $R/I(\X)$ is the degree of the dual form of $f(x,y)$, which is itself (for the details see Proposition~\ref{collinear-colinear-1}). Thus, the relation type is  $s$ (maximal possible), but $ I(\X)\cap I^2\neq  I(\X)I$ as can be readily obtained by considering the element $y^{2s-4}f$.
\end{Remark}

\section{$(s-r)$-fold collinear configurations in $\pp^2$}\label{P2}

We introduce the following  configuration of points in $\pp^2$. 
\begin{Definition}\rm\label{(s-r)configuration}
	Let $s\geq 4$ and $0\leq r\leq s-3$ be integers.
	An {\em $(s-r)$-fold collinear configuration} is a finite set of $s\geq 4$ plane points such that exactly $(s-r)$ of its points lie on a straight line.  
\end{Definition} 
\begin{Theorem}\label{collinear-colinear-1}
	Let $\mathbb{X}\subset \pp^2$ be an $(s-r)$-fold collinear configuration. If $r=0,1$, then the Jacobian ideal of $I(\mathbb{X})$ is not ${\cl V}\kern-5pt {\cl V}$-torsion free. 
\end{Theorem}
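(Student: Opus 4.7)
The plan is to exhibit, in each case, a concrete element of $I(\X)\cap I^2$ that does not lie in $I(\X)\cdot I$, thereby producing a nonzero class in $\VaVa_{I(\X)\subset I}$. A coordinate normalization comes first: in the case $r=0$, I take the common line to be $\{z=0\}$, so $I(\X)=(z,f)$ with $f$ a product of $s$ distinct linear forms in $k[x,y]$; in the case $r=1$, I place the $s-1$ collinear points on $\{z=0\}$ and the extra point at $(0{:}0{:}1)$, giving $I(\X)=(z,g)\cap(x,y)=(xz,yz,g)$ with $g$ a product of $s-1$ distinct linear forms in $k[x,y]$. A direct computation of the $2\times 2$ minors of the Jacobian matrix of the listed generators, combined with Euler's identity, simplifies the Jacobian ideal to $I=(z,f_x,f_y)$ in the first case and to $I=\fm\cdot(z,g_x,g_y)$ in the second, where $\fm=(x,y,z)$.

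For the case $r=0$, I would test the element $y^{2s-4}f$ suggested in the preceding Remark. Membership in $I(\X)$ is immediate. Rewriting it via Euler as $\tfrac{1}{s}(xy^{2s-4}f_x+y^{2s-3}f_y)$, I would invoke that $(f_x,f_y)$ is a graded complete intersection of type $(s-1,s-1)$ in $k[x,y]$, hence Gorenstein with socle degree $2s-4$, so that $(x,y)^{2s-3}\subseteq(f_x,f_y)\subseteq I$; this ensures that both $xy^{2s-4}$ and $y^{2s-3}$ lie in $(f_x,f_y)\subseteq I$ and hence that each summand sits in $I^2$. To exclude $y^{2s-4}f$ from $I(\X)\cdot I$, I would reduce modulo $z$ (so that $I(\X)\cdot I$ becomes $f\cdot(f_x,f_y)$ in $k[x,y]$) and, exploiting that $f$ is a nonzerodivisor there, reduce to $y^{2s-4}\notin(f_x,f_y)$; after a generic $(x,y)$-coordinate change, $y^{2s-4}$ represents the unique socle generator of $k[x,y]/(f_x,f_y)$ and therefore is not in that ideal.

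For the case $r=1$ with $s\geq 5$, the analogous element is $y^{2s-6}g$, where $2s-6$ is the socle degree of $k[x,y]/(g_x,g_y)$. The same Euler manipulation gives $y^{2s-6}g=\tfrac{1}{s-1}(xy^{2s-6}g_x+y^{2s-5}g_y)$; the estimate $(x,y)^{2s-5}\subseteq(g_x,g_y)$ lets me express $xy^{2s-6}=Ag_x+Bg_y$ and $y^{2s-5}=Cg_x+Dg_y$ with $A,B,C,D$ of degree $s-3\geq 2$, hence in $\fm^2$, placing the resulting expansion inside $\fm^2(z,g_x,g_y)^2=I^2$. For non-membership in $I(\X)\cdot I$, reducing modulo $z$ identifies $I(\X)\cdot I$ with $g\cdot(x,y)(g_x,g_y)$ in $k[x,y]$, and since $2s-6\geq s-1$ for $s\geq 5$ the problem reduces to $y^{2s-6}\notin(g_x,g_y)$, again handled by a generic coordinate choice making $y^{2s-6}$ the socle generator.

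The principal obstacle is the boundary $s=4$ instance of the $r=1$ case. There the exponent $2s-6=2$ drops below the threshold used above, the Euler coefficients fall to degree $s-3=1$, and in fact $y^{2}g$ fails to lie in $I^2$ at all (the non-$z$ part of $I^2$ first appears in degree $2s-2=6$). I would treat this case separately: either by moving to a higher Valabrega--Valla index $t\geq 3$ and producing a torsion element by combining $g$ with the second Hilbert--Burch syzygy of $I(\X)=(xz,yz,g)$, or by a direct Hilbert-function computation showing that some specific element of $I(\X)\cap I^t$ cannot be written as a sum of products from $I(\X)\cdot I^{t-1}$.
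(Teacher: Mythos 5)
Your argument for $r=0$ is correct but takes a genuinely different route from the paper's. The paper notes that $(f_x,f_y)$ is a complete intersection, hence of linear type, so the Aluffi algebra of $(f_x,f_y)/(f)$ coincides with the symmetric algebra and admits no defining relation of bidegree $(0,d)$, while the Rees algebra acquires the dual-curve equation in bidegree $(0,s)$; the discrepancy is the torsion. You instead exhibit the explicit witness $y^{2s-4}f$ (which the paper only mentions, without proof, in the Remark), using the socle degree $2s-4$ of the Artinian Gorenstein algebra $k[x,y]/(f_x,f_y)$ both to place it in $I^2$ via Euler's identity and to exclude it from $I(\X)I$ after reduction modulo $z$. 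For $r=1$ with $s\geq 5$ your socle-degree estimate replaces the paper's comparison of the Hilbert series of $S/(f)\mathfrak a$ and $S/((f)\cap \mathfrak a^2)$ computed from minimal free resolutions; both arguments produce a degree-two torsion element, and yours is shorter and more transparent. Two small points should be made explicit: the genericity of $y$ is legitimate because the normalizations fix only the line $\{z=0\}$ (and, for $r=1$, the point $[0{:}0{:}1]$), leaving a $GL_2$-action on $x,y$ under which the gradient ideal is invariant, and since the powers $\ell^{2s-4}$ of linear forms span the degree-$(2s-4)$ component they cannot all vanish in the one-dimensional socle.

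The genuine gap is the case $r=1$, $s=4$ (four points, three collinear), which lies inside the statement and is relied upon again in Section 3 for the classification of four points. You correctly diagnose that your witness $y^{2s-6}g=y^{2}g$ fails there: since $I=\fm(z,g_x,g_y)$ and $I^2=\fm^2(z,g_x,g_y)^2$, every degree-$5$ element of $I^2$ is divisible by $z$, so $y^2g\notin I^2$. But you then only name two possible repair strategies without carrying either out, and the case is genuinely delicate: the socle degree of $k[x,y]/(g_x,g_y)$ drops to $2$, which forces $(x,y)(g_x,g_y)=(x,y)^3$, whence $(g)\cap\big((x,y)^3\big)^2=g(x,y)^3=(g)\cdot(x,y)^3$ and the $z$-free, degree-two obstruction exploited for $s\geq 5$ (and, in effect, by the paper's Claim) collapses entirely. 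Any torsion element here must involve $z$ or live in Valabrega--Valla degree $t\geq 3$, and producing one requires an actual computation with $I=(xz,yz,z^2)+(x,y)^3$, not a sketch. Until that boundary case is settled, your proof covers the theorem only for $r=0$ (all $s\geq 4$) and for $r=1$ with $s\geq 5$.
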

\begin{proof}
Suppose that $r=0$, i.e., all points are collinear.  Say, the points lie on the line $\{z=0\}$. Then the ideal of a point $p_i (1\leq i\leq s)$ has the form  $I(p_i):=(z,\ell_{i}(x,y))$, for some $k$-linear form $ \ell_{i}(x,y)\in k[x,y]$.
Moreover, these linear forms are independent.
Then, it is an easy exercise to get
$$I(\X)=\bigcap_{i=1}^s I(p_i)=(z,f(x,y)),$$
where $f=\prod_{i=1}^{s}\ell_{i}(x,y)$. 

By using Euler formula, the Jacobian ideal of $I(\mathbb{X})$ is $I=(z,\partial f/\partial x, \partial f/\partial y)$.  
Now, since $R/I(\X)\simeq k[x,y]/(f)$ and $I/I(\X)\simeq (\partial f/\partial x, \partial f/\partial y)/(f)$, one can argue with these simplified structures.
Since the ideal $(\partial f/\partial x, \partial f/\partial y)$ is a complete intersection, hence of linear type, it follows that the Aluffi algebra of $(\partial f/\partial x, \partial f/\partial y)/(f)$ is isomorphic to its symmetric algebra (see~\cite{AA}), whose ideal of relations contains no form of bidegree $(0,d)$, with $d\geq 1$. 
On the other hand, the defining relations of its Rees algebra contains the equation of the dual curve to $f$, which is $f$ itself read in the relational variables.
This gives a relation of bidegree $(0,s)$, $s\geq 4$. 
By Proposition~\ref{Aluffi_torsion}, this relation gives a non-trivial torsion.

\medskip

Now let $\mathbb{X}$ be an $(s-1)$-fold collinear configuration. As before, assume that the points $p_1,\ldots,p_{s-1}$ lie on  $\{z=0\}$, while $p_s\notin \{z=0\}$.  Let $l_{i,s}\in k[x,y]$ denote the linear form defining the unique straight line through  $p_i$ and $p_s$, for $i=1,\ldots,s-1$. 
Then a simple calculation yields
\begin{eqnarray}\label{simple_calc}\nonumber
I(\X)&=&\left(\bigcap_{i=1}^{s-1} I(p_i)\right)\cap I(p_s)=\left(\bigcap_{i=1}^{s-1} (z,l_{i,s})\right)\cap (l_{1,s}, l_{2,s})=(z, \prod_{i=1}^{s-1}l_{i,s})\cap (l_{1,s}, l_{2,s})\\
&=& \left((z)\cap (l_{1,s}, l_{2,s}), \prod_{i=1}^{s-1}l_{i,s}\right)=\left(zl_{1,s}, zl_{2,s}, \prod_{i=1}^{s-1}l_{i,s}\right)
\end{eqnarray}
since $\{z,l_{1,s}, l_{2,s}\}$ is a regular sequence  both linear forms $l_{1,s}, l_{2,s}$  are factors of $\prod_{i=1}^{s-1}l_{i,s}$.

Next, writing $p_i=[a_i:1:0]$ for $i=1,\ldots, s-1$ and $p_s:=[0,0,1]$, hence $l_{i,s}=x-a_iy$. Setting $f:=\prod_{i=1}^{s-1}l_{i,s}$, 
the Jacobian matrix of $I(\mathbb{X})$ is 
\[\Theta= \begin{bmatrix}
z & -a_1z & x-a_1y\\
z & -a_2z & x-a_2y\\
f_x & f_y & 0\\
\end{bmatrix}.\]

The $2\times 2$-minors fixing the last row yield the following subset $\{xf_x,yf_x,xf_y,yf_y\}\subset k[x,y]$ of a set of minimal generators of the Jacobian ideal $I$. Set $\mathfrak{a}:=(xf_x,yf_x,xf_y,yf_y)$, as an ideal of the ring $k[x,y]$. 

{\sc Claim.} $(f)\cap \mathfrak{a}^2\neq (f)\mathfrak{a}$.

To see this, note that, as an ideal of codimension $2$ in $k[x,y]$, $\mathfrak{a}$ is $(x,y)$-primary, hence perfect. Thus, its matrix of syzygies is a $4\times 3$ matrix containing the obvious columns
\begin{equation}\label{linear_syzygies}
\begin{array}{cc}
-y&0\\
x&0\\
0&-y\\
0&x
\end{array}
\end{equation}
Counting degrees and letting $S=k[x,y]$,  the minimal free resolution of $S/\mathfrak{a}$ over $S$ has the form
\begin{equation}\label{Res I1}
0\rar S^2(-s)\bigoplus S(-(2s-4))\rar S^4(-(s-1))\rar S\rar S/\mathfrak{a}\rar 0.
\end{equation}
Therefore, the minimal free resolution of $S/(f)\mathfrak{a}$ over $S$ has the form
\begin{equation}\label{Res Ibis}
0\rar S^2(-(2s-1))\bigoplus S(-(3s-5))\rar S^4(-2(s-1))\rar S\rar S/(f)\mathfrak{a}\rar 0.
\end{equation} 

Now consider the ideal $\mathfrak{a}^2$. By the same token, it is a perfect ideal of codimension $2$. Moreover, since $f_x$ and $f_y$ have no common factors, it will turn out that  $\mathfrak{a}^2$ is minimally generated by  $9$ forms of degree $2(s-1)$.
It follows that its syzygy matrix is $9\times 8$.
Since the linear syzygies in (\ref{linear_syzygies}) generate $6$ independent linear syzygies of  $\mathfrak{a}^2$, it follows that the remaining two syzygies have the same  degree $3(s-2)$.
It follows that the minimal free $S$-resolution of $S/ \mathfrak{a}^2$ has the form
\begin{equation}\label{Res I2}
0 \rar  S^6(-(2s-1))\bigoplus S^2(-(3(s-2)))\rar S^9(-(2(s-1)))
\rar  S\rar S/\mathfrak{a}^2\rar 0
\end{equation}
Now, consider the ideal $(f,\mathfrak{a}^2)$.
Drawing upon the Euler relation, it is easy to see that this ideal is minimally generated by $f$ and $5$ more among the original generators of $\mathfrak{a}^2$.
Moreover, it inherits $4$ linear syzygies among those of $\mathfrak{a}^2$.
Therefore, its minimal free $S$-resolution has the shape
\begin{eqnarray}\label{Res I4}
0&\rar & S(-(2s+1))\oplus S^4(-(2s-1))\rar S(-(s-1))\oplus S^5(-(2s-2))\\ \nonumber
&\rar & S\rar S/(f,\mathfrak{a}^2)\rar 0.
\end{eqnarray}
Finally, take the exact squence 
\begin{equation*}
0\rar \dfrac{S}{(f)\cap \mathfrak{a}^2}\rar \dfrac{S}{(f)}\bigoplus \dfrac{S}{\mathfrak{a}^2}\rar \dfrac{S}{(f,\mathfrak{a}^2)}\rar 0.
\end{equation*}
Applying to this exact sequence the information gathered in (\ref{Res I2}),(\ref{Res I4}), we find the Hilbert series of the left-most term:
\[\mathrm{HS}\left(\frac{R}{(f)\cap \mathfrak{a}^2}, t\right)=\dfrac{1+t+t^2+\cdots+t^{2s-3}-3t^{2s-2}-t^{2s-1}-t^{2s}-\cdots-2t^{3s-7}}{(1-t)}. \]
Using (\ref{Res Ibis}), one has
\[\mathrm{HS}\left(\frac{R}{(f)\mathfrak{a}}, t\right)= \dfrac{1+t+t^2+\cdots+t^{2s-3}-3t^{2s-2}-t^{2s-1}-t^{2s}-\cdots-t^{3s-7}-t^{3s-6}}{(1-t)}.\]
As the two  Hilbert series are different, it follows that $(f)\cap \mathfrak{a}^2\neq (f)\mathfrak{a}$. This completes the proof of the claim.

The Jacobian ideal $I$  is minimally generated by the monomials  $xz,yz,z^2$ and $\mathfrak{a}$. By the above claim, there exists a polynomial $g\in (f)\cap \mathfrak{a}^2\subset  S$ such that $g\neq (f)\mathfrak{a}$. Since the ideal $I(\X)I$ is generated by  polynomials which contain the variable $z$ and $f\mathfrak{a}$, it follows that  the polynomial $g$ does not belong to $I(\X)I$. This proves that the Jacobian ideal of $I(\X)$ is not ${\cl V}\kern-5pt {\cl V}$-torsion free. 	
\end{proof}

\smallskip

The configuration studied so far requires that $r\leq s-3$, hence $r\leq 1$ for $s=4$.
However, for $s\geq 5$, $r=2$ is a kosher value.
We now digress on a slight degeneration of such a configuration.

\begin{Theorem}\label{s-2points-collinear}
	Let $\mathbb{X}\subset \pp^2$ be an $(s-2)$-fold collinear configuration of  $s\geq 8$ distinct points.  
	Then  the Jacobian ideal  of $I(\X)$ is  not $\VaVa$-torsion-free. 
\end{Theorem}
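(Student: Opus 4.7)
The plan is to adapt the proof of Theorem~\ref{collinear-colinear-1} (the $r=1$ case) to the setting of two off-line points. First, fix projective coordinates so that the $s-2$ collinear points are $p_i=[a_i:1:0]$ for $i=1,\ldots,s-2$, and choose the two remaining points in convenient position off the line $\{z=0\}$, say $p_{s-1}=[0:0:1]$ and $p_s$ at a generic off-line location. Setting $f=\prod_{i=1}^{s-2}(x-a_iy)$, compute
\[I(\X)=(z,f)\cap I(p_{s-1})\cap I(p_s)\]
and extract a minimal generating set which retains $f$ as one of its elements, while the other generators necessarily carry the variable $z$ or the linear forms attached to $p_{s-1},p_s$.

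Next, write down the Jacobian matrix $\Theta$ of $I(\X)$ and form the subideal $\mathfrak{a}\subset S:=k[x,y]$ of the Jacobian ideal $I$ generated by those $2\times 2$-minors (and their $k$-linear combinations) whose entries lie in $S$. As in Theorem~\ref{collinear-colinear-1}, $\mathfrak{a}$ will contain $(xf_x,yf_x,xf_y,yf_y)$, possibly supplemented by further forms arising from the two extra generators of $I(\X)$. Verify that $\mathfrak{a}$ is $(x,y)$-primary and perfect of codimension $2$, and compute its minimal free $S$-resolution by Hilbert--Burch. The heart of the argument is then to establish $(f)\cap\mathfrak{a}^2\neq (f)\mathfrak{a}$ by Hilbert series comparison through the short exact sequence
\[0\rar \frac{S}{(f)\cap\mathfrak{a}^2}\rar \frac{S}{(f)}\oplus \frac{S}{\mathfrak{a}^2}\rar \frac{S}{(f,\mathfrak{a}^2)}\rar 0,\]
which in turn requires the minimal free resolutions of $S/\mathfrak{a}^2$ and $S/(f,\mathfrak{a}^2)$. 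The linear Koszul-type syzygies of $\mathfrak{a}^2$ descend from those of $\mathfrak{a}$, while the non-linear syzygies are forced by its perfect codimension-$2$ structure; to resolve $S/(f,\mathfrak{a}^2)$ one uses the Euler relation to identify which generators of $\mathfrak{a}^2$ become redundant modulo $f$. The hypothesis $s\geq 8$ should emerge precisely as the threshold at which the relevant degrees are separated enough for the two Hilbert series to differ.

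Producing $g\in(f)\cap\mathfrak{a}^2\setminus(f)\mathfrak{a}$, one concludes exactly as in Theorem~\ref{collinear-colinear-1}: every generator of $I(\X)I$ either carries the variable $z$ or one of the linear forms attached to $p_{s-1},p_s$ (so cannot equal $g\in S$) or else lies in $(f)\mathfrak{a}$, witnessing that $g\in I(\X)\cap I^2\setminus I(\X)I$. The main obstacle I foresee is pinning down the extra generators and syzygies of $\mathfrak{a}$ contributed by the two off-line points, and then carrying out the delicate degree-by-degree bookkeeping of the resolutions of $\mathfrak{a}^2$ and $(f,\mathfrak{a}^2)$. This is also where the specific bound $s\geq 8$ (rather than a smaller threshold) will need justification, by verifying that for $s\leq 7$ certain terms in the two Hilbert series conspire to cancel.
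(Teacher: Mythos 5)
There is a genuine gap at the very first step of your plan: with two points off the collinearity line, the binary form $f=\prod_{i=1}^{s-2}(x-a_iy)$ does \emph{not} belong to $I(\X)$. In the $(s-1)$-fold case the product $\prod_i l_{i,s}$ worked because each factor was the line joining $p_i$ to the \emph{single} off-line point $p_s=[0:0:1]$, so the product vanished at every point of $\X$ and happened to lie in $k[x,y]$. Here a degree-$(s-2)$ form in $k[x,y]$ vanishing at the $s-2$ collinear points is forced to be a scalar multiple of $\prod_i(x-a_iy)$, and this vanishes at the second off-line point only if that point lies on one of the lines through $p_i$ and $[0:0:1]$ --- a non-generic coincidence. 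In general $I(\X)\cap k[x,y]$ contains nothing in degree $s-2$: the paper's computation shows the degree-$(s-2)$ generator is $f=G+yz^{s-3}$ with $G\in k[x,y]$, so it necessarily carries $z$. Consequently the entire scheme of working inside $S=k[x,y]$ with $(f)\cap\mathfrak{a}^2$ versus $(f)\mathfrak{a}$, and the concluding step ``as in Theorem~\ref{collinear-colinear-1}'' (which relied on $f\in I(\X)$ and $\mathfrak{a}\subset I$ to get membership in $I(\X)\cap I^2$ for free), collapses. Relatedly, the $k[x,y]$-part of the Jacobian ideal is not $(xf_x,yf_x,xf_y,yf_y)$ but rather $\mathfrak{b}=(G,\,(y-x)f_x,\,(y-x)H,\,y^2f_x)$, where $H$ is only the $z$-free part of $f_y$; and you silently suppress the case split between the two projectively inequivalent sub-configurations (line through $p_{s-1},p_s$ meeting the collinearity line in a point of $\X$ or off $\X$), whose defining ideals have different generator structure and must both be treated.

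The paper's actual route is correspondingly different in its second half: after computing the generators (which works for all $s\geq 5$), it exhibits the explicit element $E=y^{2s-8}G-yz^{2s-3}$ and certifies separately that $E\in I(\X)$ (using a $z$-correction by the cubic generator $c$), that $y^{2s-8}G\in\mathfrak{b}^2\subseteq I^2$ and $yz^{2s-3}\in I^2$ (via explicit polynomial identities resting on $y^{2s-7}\in\mathfrak{b}$ with $2s-7$ minimal), and that $E\notin I(\X)I$. The hypothesis $s\geq 8$ enters through the degrees of the coefficient forms in these identities, not through a cancellation in a Hilbert series comparison as you conjecture. Your resolution/Hilbert-series machinery is not wrong in itself, but it is aimed at an ideal-pair $\bigl((f),\mathfrak{a}\bigr)$ in $k[x,y]$ that does not arise from this configuration; to repair the argument you would need to replace $f$ by an element of $I(\X)$ that genuinely involves $z$ and then supply the membership certificates that the $r\leq 1$ argument gave you automatically.
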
 
\begin{proof}
There are two sub-cases of this configuration, according to which the collinearity line and the straight line through the remaining two points intersect in $\X$ or off $\X$. Let $\X_{\mathrm{in}}$ and $\X_{\mathrm{off}}$ stand for these configurations of points, respectively. 
 
We compute a set of minimal generators of $I(\X_{\mathrm{in}})$ and $I(\X_{\mathrm{off}})$ and this will work fine for any $s\geq 5$.
Take $p_1,\ldots,p_{s-2}$ to lie on the straight line $\{z=0\}$. Let  $p_{s-1}$ and $p_s$ the remaining points off the collinear line. Let $l_{i,s-1}$ and $l_{i,s}$ denote, respectively, the linear forms defining the straight lines through $p_i$ and $p_{s-1}$ and through $p_i$ and $p_s$, for $i=1,\ldots,s-2$. 
Finally, let $l$ denote the linear form defining the straight line through $p_{s-1}$ and $p_s$. 

A simple calculation, based on the same elementary principles as for (\ref{simple_calc}), yields
\begin{eqnarray*}
	I({\X_{\mathrm{in}}})&=&\left( \bigcap_{i=1}^{s-3}(z,l_{i,s})\right)\cap (z,l)\cap (l_{1,s},l)\cap (l_{1,s-1},l)= \left(z,\prod_{i=1}^{s-3}l_{i,s}\right)\cap \left(zl_{1,s}l_{1,s-1},l\right)\\
	&=& \left(zl_{1,s}l_{1,s-1}, l\cap (z,\prod_{i=1}^{s-3}l_{i,s})\right)=\left(zl,\, zl_{1,s}l_{1,s-1},\,l\prod_{i=1}^{s-3}l_{i,s}\right).
\end{eqnarray*}
Now setting 
$$J:=(zl,\, zl_{1,(s-1)}l_{(s-2),s},\,l_{(s-2),s}\prod_{i=1}^{s-3}l_{i,(s-1)}).$$ Clearly $J\subseteq I(\X_{\mathrm{off}})$. By construction, $J$ is generated by $2\times 2$ minors of the $2\times 3$ Hilbert-Burch matrix 
\[
\begin{bmatrix}
l_{1,(s-1)}l_{(s-2),s}& 0\\
l& \prod_{i=2}^{s-3}l_{i,(s-1)}\\
0&z 
\end{bmatrix}.
\]
Hence the minimal free $R$-resolution of $R/J$ has the form 
\[0\rar R(-4)\oplus R(-(s-1))\rar R(-2)\oplus R(-3)\oplus R(-(s-2))\rar R\rar R/J\rar 0.      \]
It follows that the Hilbert series of $R/J$ is 
\[\mathrm{HS}(R/J,t)=\dfrac{1-t^2-t^3+t^4-t^{s-2}+t^{s-1}}{(1-t)^3}=\dfrac{1+2t+2t^2+t^3+\ldots+t^{s-3}}{1-t}. \]
Write $\X_{\mathrm{off}}:=\X_1\cup \X_2 $, where $\X_1$ is the set of $s'=s-1$ points in $(s'-1)$-fold collinear configuration and $\X_2=\{p_s\}$. One has the short exact sequence 
\begin{equation*}\label{ExactSeq}
0\rar R/I(\X_{\mathrm{off}})\rar R/I(\X_1)\bigoplus R/I(\X_2)\rar R/(I(\X_1),I(\X_2))\rar 0.
\end{equation*}
Direct inspection gives
\[R/(I(\X_1),I(\X_2))\simeq k[z]/(z^2)\quad, \quad R/I(\X_2)\simeq k[z] .\] 
Using the above exact sequence and Theorem~\ref{collinear-colinear-1}, we get 
\begin{eqnarray}
\nonumber \mathrm{HS}(R/I(\X_{\mathrm{off}}))&=&\dfrac{1+2t+t^2+t^3+\ldots+t^{s-3}}{1-t}+\dfrac{1}{1-t}-(1+t)\\
\nonumber &=&\dfrac{1+2t+2t^2+t^3+\ldots+t^{s-3}}{1-t},
\end{eqnarray}
which proves that $J=I(\X_{\mathrm{off}})$.

Now we prove $\VaVa_{I(\X_{\mathrm{off}})\subset I}\neq0$. Similar argument apply to the configuration $\X_{\mathrm{in}}$. 
Writing $p_i=[1:a_i,0]$  with $a_i\neq 0,1$ for $i=1,\ldots,s-2$, $p_{s-1}=[0:0:1]$ and $p_s=[1:1:1]$. It follows that the ideal $I(\X_{\mathrm{off}})$ is generated by 
\[q:=xz-yz\quad,\quad c:=zy^2-yz^2  \quad,\quad f:=\sum_{i=0}^{s-2}g_ix^{s-2-i}y^{i}+yz^{s-3}, \]
where each $g_i$ is a certain polynomial expression of the $a_i$'s. 
The Jacobian matrix of $I(\X_{\mathrm{off}})$ is
\[\Theta=
\begin{bmatrix}
z & -z & x-y\\
0 & 2yz-z^2 & y^2-2yz\\
f_x & f_y & f_z\\
\end{bmatrix}
,\]
where 
\[ f_x  =  \sum_{i=1}^{s-2}(s-2-i)g_ix^{s-3-i}y^{i},\,f_y  =  \sum_{i=1}^{s-2}(i)g_ix^{s-2-i}y^{i-1}+z^{s-3}, \, f_z  =  (s-3)yz^{s-4}.  \]

The ideal generated by the  $2\times 2$ minors fixing the first two rows  and the generators $q$ and $c$ yield the subset $\{xz-yz,yz^2,zy^2,z^3\}$ of a set of minimal generators of the Jacobian ideal $I$. Setting $\mathfrak{a}:=(xz-yz,yz^2,zy^2,z^3)$.   
One can verify  that the polynomials $zf_x,\, zf_y,\, zf_z$  belong to the ideal $\mathfrak{a}$ and  the Jacobian ideal is minimally generated by
\[I=(xz-yz\ ,\ y^2z \ ,\ yz^2,\ z^3,\ G\ ,\ (y-x)f_x\ ,\ (y-x)H\ ,\ y^2f_x) ,\]
where $G=\sum_{i=0}^{s-2}g_ix^{s-2-i}y^{i}$ and $H=\sum_{i=1}^{s-2}(i)g_ix^{s-2-i}y^{i-1}$. 
Set 
$$\mathfrak{b}:=(G,\,(y-x)f_x,\,(y-x)H,\,y^2f_x),$$ 
a zero dimensional homogeneous ideal.
Then $\mathfrak{b}\cap k[y]\neq 0$. The following relation for suitable $ \alpha_i, \beta_i, \gamma_i$ -- certain polynomials like expression in the $a_i$'s -- shows that $ y^{2s-7} \in \mathfrak{b} $ and $ 2s-7$ is  minimum with this property. $$ y^{2s-7}= \lambda_1G+\lambda_2(y-x)H+\lambda_3y^2f_x \in \mathfrak{b},$$ where 
\begin{eqnarray}
\nonumber \lambda_1 & = & \alpha_1x^{s-5}+\alpha_2x^{s-6}y+\ldots+ \alpha_{s-5}xy^{s-6}+\alpha_{s-4}y^{s-5},\\
\nonumber \lambda_2 & = & \beta_1x^{s-6}y+\beta_2x^{s-5}y^2+\ldots+\beta_{s-6}xy^{s-6}+\beta_{s-5}y^{s-5},\\
\nonumber \lambda_3 & = & \gamma_1x^{s-6}+\gamma_2x^{s-7}y+\ldots+\gamma_{s-6}xy^{s-7}+\gamma_{s-5}y^{s-6}.\ \    
\end{eqnarray}
Now consider the polynomial $E:=y^{2s-8}G-yz^{2s-3}$. Clearly, $E$ dose not belong to $I(\X_{\mathrm{off}})I$. We show that $E\in I(\X_{\mathrm{off}})\cap I^2$, which proves that $\VaVa\neq 0$ in degree $2$.  We have 
$$y^{2s-8}G-yz^{2s-3}=y^{2s-8}f+ (y^{2s-9}z^{s-4}+y^{2s-10}z^{s-3}+ \ldots + yz^{s+2}+z^{s+3})c \in I(\X), $$ 
and
\begin{eqnarray}
\nonumber y^{2s-8}G &=& T_1G^2+T_2((y-x)f_xG)+T_3((y-x)HG)+
+T_4((y-x)^2f_x^2)\\
\nonumber 	 &+&T_5((y-x)^2f
+ T_6((y-x)^2H^2)+\ell_{1}(y^2f_xG)	+\ell_{2}((y-x)y^2f_x^2)\\
\nonumber &+ &\ell_{3}((y-x)Hy^2f_x)+y^4f_x^2   \in \mathfrak{b}^2\subseteq I^2, 
\end{eqnarray}
where
\begin{eqnarray}
\nonumber T_{i} & = & \gamma_{i1}x^{s-6}+\gamma_{i2}x^{s-7}y+\ldots+ \gamma_{i(s-6)}xy^{s-7}+\gamma_{i(s-5)}y^{s-6}, \  \  1 \leq i \leq 6 \\
\nonumber \ell_{i} & = & \beta_{i1}x^{s-7}+\beta_{i2}x^{s-8}y+\ldots+\beta_{i(s-7)}xy^{s-8}+\beta_{i(s-8)}y^{s-7}, \  \   1 \leq i \leq 3,
\end{eqnarray}
with $\beta_{ij}, \gamma_{ij}\in k$.  Finally,  $ yz^{2s-3}= (yz^5)z^{2s-8} \in I^2$, which proves that $ E\in I(\X_{\mathrm{off}})\cap I^2$.
\end{proof}

Recall that a finite set of $s\geq 3$ distinct points in $\pp^2$ are in \textit{general linear position }if no subset of three points lie on a line. 
\begin{Theorem}\label{s-3}
	Let $\X\subseteq \pp^2$ be an $(s-3)$-fold collinear configuration of $s\geq 9$ distinct points. Suppose, moreover, that the remaining three points are in general linear position. Then the Jacobian ideal of $I(\X)$ is not $\VaVa$-torsion-free.  
\end{Theorem}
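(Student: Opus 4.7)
The plan is to mirror the proof of Theorem~\ref{s-2points-collinear}, with the added complexity of three off-line points. I would first fix coordinates so that the $s-3$ collinear points are $p_i=[a_i:1:0]$, $i=1,\ldots,s-3$, with distinct $a_i$, and the triangle $\{p_{s-2},p_{s-1},p_s\}$ is the coordinate triangle $[0:0:1],[1:0:1],[0:1:1]$, whose sides are $\{y=0\}$, $\{x=0\}$, $\{x+y-z=0\}$. Writing $f=\prod_{i=1}^{s-3}(x-a_iy)$, an intersection computation in the spirit of (\ref{simple_calc}) together with the Hilbert series comparison coming from
\[
0\rar R/I(\X)\rar R/(z,f)\oplus R/I(\mathrm{triangle})\rar R/((z,f)+I(\mathrm{triangle}))\rar 0
\]
produces a minimal generating set of $I(\X)$ containing three cubics of the form $z\cdot(\text{product of two triangle sides})$, in particular the polynomial $c:=zy(y-z)$, together with a mixed generator of shape $F=\alpha z+f$ of degree $s-3$, where $\alpha\in R_{s-4}$ is determined by forcing $F$ to vanish on the triangle, and possibly further generators of degree at most $s-1$ that come from $f$ multiplied by pieces of the triangle sides.

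Next, I would compute the Jacobian matrix $\Theta$ of these generators and extract, from the $2\times 2$ minors fixing the rows coming from $F$, a family of generators of the Jacobian ideal $I$ living entirely in $k[x,y]$. These span a subideal $\mathfrak{b}\subset I$ whose generators play the role of $(G,(y-x)f_x,(y-x)H,y^2f_x)$ in Theorem~\ref{s-2points-collinear}. Since $\{f_x,f_y\}$ has no common factor and the triangle sides contribute only elementary linear syzygies, $\mathfrak{b}$ is $(x,y)$-primary, and a direct degree count pinpoints an explicit exponent $N=N(s)$, of the order of $2s$, together with polynomial multipliers $\lambda_j\in k[x,y]$ such that $y^N\in\mathfrak{b}^2$. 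The constraint $s\geq 9$ is the natural analogue of $s\geq 8$ in the preceding theorem and arises from the degree constraints on these multipliers.

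With $N$ fixed, I would set
\[
E:=y^{N}f-\mu\,y\,z^{M},
\]
for the unique $M$ making $E$ homogeneous and the unique scalar $\mu\in k^{\times}$ that forces $y^{N}\alpha+\mu yz^{M-1}$ to vanish at the remaining triangle point $[0:1:1]$. The verification breaks into three steps: (i)~$E\in I(\X)$, obtained by substituting $y^Nf=y^NF-y^N\alpha z$ and absorbing the correction $-y^N\alpha z-\mu yz^M=-z(y^N\alpha+\mu yz^{M-1})$ into the ideal generated by $F$ and the three cubics via a telescoping identity based on $c=zy(y-z)$, analogous to the one used in Theorem~\ref{s-2points-collinear}; (ii)~$E\in I^2$, because $y^Nf\in\mathfrak{b}^2\subseteq I^2$ by the explicit identity above, while $\mu yz^M$ factors as a product of two generators of $I$ each divisible by $z$; (iii)~$E\notin I(\X)\cdot I$, by a bidegree argument modulo $z$: every term of $I(\X)\cdot I$ reduces modulo $z$ to an element of $(xy\,f)\cdot I|_{z=0}$, whereas $E\equiv y^Nf\pmod{z}$ sits in too low a degree to be of that form once $s\geq 9$.

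The main obstacle will be step (i) combined with the precise combinatorial identity $y^N\in\mathfrak{b}^2$. The present configuration introduces an extra triangle side compared to the $(s-2)$-fold case, so the mixed generator $F$ has a strictly more intricate $\alpha z$-tail, $\mathfrak{b}$ acquires an additional generator, and the telescoping that simultaneously realizes $y^Nf\in\mathfrak{b}^2$ and absorbs the $z$-correction into $I(\X)$ is correspondingly tighter. This extra rigidity is exactly what shifts the lower bound from $s\geq 8$ to $s\geq 9$.
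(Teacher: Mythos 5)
Your proposal follows essentially the same route as the paper's proof: the same normalization of coordinates, the same computation of $I(\X)$ as the three cubics $zq_1,zq_2,zq_3$ (products of $z$ with pairs of triangle sides) plus a single mixed generator $f(x,y)+z^{s-4}(ey-x)$ of degree $s-3$ verified by a Hilbert series comparison (there are in fact no further generators), the same extraction of a zero-dimensional subideal $\mathfrak{b}\subset k[x,y]$ from the Jacobian minors, and the same shape of witness element --- the paper's is $y^{2s-10}f-yz^{3s-14}$ --- shown to lie in $I(\X)\cap I^2$ but not in $I(\X)I$. The only differences are cosmetic: the paper pins down the minimal exponent $y^{2s-9}\in\mathfrak{b}$ and realizes $y^{2s-10}f\in\mathfrak{b}^2$ by the explicit identity carried over from Theorem~\ref{s-2points-collinear}, where you leave $N$ and the multipliers implicit.
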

\begin{proof}
	We compute a set of minimal generators of $I(\X)$ and this will work for any $s\geq 6$. Assume that $p_1,\ldots,p_{s-3}$ lie on the straight line $\{z=0\}$. Let $p_{s-2},\, p_{s-1},\, p_s$ the remaining points off the collinear line which are in general linear position. Let $l_{i,j}$ denote the linear form defining the straight line through points $p_i$ and $p_j$.  The defining ideal of $3$ points in general linear position is generated by three conics 
	\[q_1=l_{(s-2),s}l_{(s-1),s},\, q_2=l_{(s-2),s}l_{(s-2),(s-1)},\, q_3=l_{(s-1),s}l_{(s-2),(s-1)}. \]
	 Consider the polynomial $G:=l_{1,(s-2)}l_{2,(s-1)}\prod_{i=3}^{s-3}l_{is}$.  Setting 
	$$J:=(zq_1,\, zq_2,\, zq_3,\, G)\subset  R=k[x,y,z].$$
	By construction, the generators of $J$ vanishes on $\X$ and hence  $J\subseteq I(\X)$. We claim that $J=I(\X)$. Setting  $\mathfrak{a}:=(zq_1,\, zq_2,\, zq_3)$. Consider the short exact sequence 
	\[0\rar \frac{R}{(\mathfrak{a}:_R G)} (-(s-3))\rar \frac{R}{\mathfrak{a}}\rar\frac{R}{J} \rar 0. \]
	Direct inspection gives that $(\mathfrak{a}:_R G)=(z)$. One has 
	\begin{eqnarray}
	\nonumber \mathrm{HS}(R/J,t)&=&  \mathrm{HS}(R/\mathfrak{a},t)-\mathrm{HS}(\frac{R}{(z)}(-(s-3)),t))\\
	\nonumber &=&\dfrac{1+t+t^2-2t^3}{(1-t)^2}-\dfrac{t^{s-3}}{(1-t)^2}\\
	\nonumber &=&\dfrac{1+2t+3t^2+t^3+\ldots+t^{s-5}+t^{s-4}}{1-t}.
	\end{eqnarray}
	Next write $\X=\X_1\cup\X_2$, where $\X_1$ is the set of $(s-3)$ collinear points  and $\X_2$ is the set of $3$ points in general linear position. By Theorem~\ref{collinear-colinear-1}, we get $I(\X_1)=(z,f(x,y))$, where $f(x,y)$ is a reduced polynomial of degree $s-3$. We have 
	\[\mathrm{HS}(R/I(\X_1),t)=\dfrac{1+t+t^2+\ldots+t^{s-4}}{(1-t)}. \]   
	Also $R/I(\X_2)$ has the Hilbert series 
	\[\mathrm{HS}(R/I(\X_2),t)=\dfrac{1+2t}{(1-t)}.\]
	Since $\mathfrak{b}:=(I(\X_1),I(\X_2))=(x^2,xy,y^2,z)$, it follows that $R_2=\mathfrak{b}_2$, hence
	\[\mathrm{HS}(R/\mathfrak{b},t)=1+2t.\]
	Using the exact sequence (\ref{ExactSeq}) in this setup, we get  
	\begin{eqnarray}
	\nonumber \mathrm{HS}(R/I(\X),t)&=&\dfrac{1+t+t^2+\ldots+t^{s-4}}{(1-t)}+\dfrac{1+2t}{(1-t)}-(1+2t)\\
	\nonumber &=& \dfrac{1+2t+3t^2+t^3+\ldots+t^{s-5}+t^{s-4}}{1-t} 
	\end{eqnarray}
	which proves the claim. 
	
	We may assume that $p_i=[a_i:1:0]$ for $i=1,\ldots,s-3$ with $a_i\neq 0,1,-1$ and, by a projective transformation,  $p_{s-2}=[1:0:1],\, p_{s-1}=[0:1:1],\, p_s=[0:0:1]$. 
	In this setting, one gets $\mathfrak{a}=(zxy,\, x^2z-xz^2,\, y^2z-yz^2 )$ and  
	\[I(\X)=(\mathfrak{a},\, f(x,y)+z^{s-4}(ey-x)),\]
	where $f(x,y)=\prod_{i=1}^{s-3}(x-a_iy)$ and $e=\prod_{i=1}^{s-3}a_i$. The Jacobian matrix of $I(\X)$ is 
	\[\Theta=\begin{bmatrix}
	yz&xz&xy\\
	2xz-z^2&0&x^2-2xz\\
	0&2yz-z^2&y^2-2yz\\
	f_x-z^{s-4}& f_y+ez^{s-4}& (s-4)z^{s-5}(ey-x)
	\end{bmatrix}
	.\]
	Let  $I_1:=(\mathfrak{a},I_2(\Theta'))$, where $\Theta'$ is a   submatrix of $\Theta$ that we delete the last row. Thus the ideal  $I_1$ is minimally  generated by $\mathfrak{a}$ and the monomials $xz^3,yz^3,z^4$. We have the following relations
	\begin{eqnarray}
	\nonumber f(x,y)+z^{s-4}(ey-x) &=& \frac{1}{s-3}(xf_x+yf_y)+z^{s-4}(ey-x)\\
	\nonumber xyf_x&=&yG-yz^{s-4}(x+ey)-y^2f_y.
	\end{eqnarray}
	Note that by symmetry, the second relation holds for $xyf_y$. 
	Using the ideal $I_1$ and the above  relation, we conclude that 
	\[I=(\mathfrak{a},\, xz^3,\,yz^3,\, z^4,\, f,\, x^2f_x,\,x^2f_y,\,y^2f_x,\,y^2f_y).\]
	Setting $\mathfrak{b}:=(f,\, x^2f_x,\,x^2f_y,\,y^2f_x,\,y^2f_y)$. The ideal $\mathfrak{b}$ is zero dimensional homogeneous ideal. By the same argument as in the proof of Theorem~\ref{s-2points-collinear}, we conclude that $y^{2s-9}$ belongs to $\mathfrak{b}$ and the power is minimum. Also the polynomial $y^{2s-10}f-yz^{3s-14}\in I(\X)\cap I^2$, but not in $I(\X)I$ which prove that $\VaVa\neq 0$ in degree $2$. 
\end{proof}

\section{ Five  and six  points in $\pp^2$}\label{4-5-6points}
Let $\X$ be a set of $4$ points in $\pp^2$. There exists only three configurations on their geometry. More precisely, points are in general linear position, $\X$ is $3$-fold collinear configuration and $X$ is $4$-fold collinear configuration.  The Jacobian ideal of $I(\X)$ is not $\VaVa$-torsion-free. In fact, the general linear position  follows by \cite[Section 3.1]{AAR} and the other configurations follow by  Theorem~\ref{collinear-colinear-1}.  

\medskip

Let $\mathbb{X}=\{p_1,p_2,p_3,p_4,p_5\}$ be a set of $5$ points in $\mathbb{P}^2$. There exist only five configurations on their geometry: 
\begin{itemize}
	\item [{\rm (1)}] $\X$ is in general linear position.
	\item [{\rm (2)}] $\X$ is a $5$-fold collinear configuration.
	\item [{\rm (3)}] $\X$ is a $4$-fold collinear configuration.
	\item [{\rm (4)}] $\X$ is a $3$-fold collinear configuration.
	\item [{\rm (5)}] $\X$ is a $3$-fold collinear configuration such that the  straight line through the remaining two points intersect in a point of $\X$.
\end{itemize}
The following result characterizes $\VaVa$-torsion freeness of five points in their geometry.
\begin{Proposition}\label{5-points}
	Let $\mathbb{X}$ be a set of $5$ distinct points in $\mathbb{P}_k^2$. The Jacobian ideal of  $I(\X)$ is $\VaVa$- torsion-free if and only if $\X$ is  the configurations  {\rm (1),(4),(5)}.
	\end{Proposition}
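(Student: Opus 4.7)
The proposition splits naturally into the two directions. For the \emph{only if} direction, configurations (2) and (3) are immediate consequences of Theorem~\ref{collinear-colinear-1}: configuration (2) is a $5$-fold collinear configuration ($s=5$, $r=0$) and configuration (3) is a $4$-fold collinear configuration ($s=5$, $r=1$), and in both the Jacobian ideal carries nontrivial $\VaVa$-torsion.

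For the \emph{if} direction, the plan is to apply Proposition~\ref{ATFN} to each of configurations (1), (4), (5) and verify $I(\X)\cap I^t = I(\X)\,I^{t-1}$ for $2\leq t\leq 5$. The preparation consists in fixing normalized coordinates and describing $I(\X)$ explicitly. In configuration (1), five general points lie on a unique smooth conic, so after a projective change of coordinates one may place them concretely (say on $V(xz-y^2)$); a standard Hilbert function count yields $I(\X)$ minimally generated by one quadric and two cubics. In configurations (4) and (5), place $p_1,p_2,p_3$ on $\{z=0\}$ as $[a_i:1:0]$ and parametrize $p_4,p_5$ so that the straight line through them either avoids $\X$ (case (4)) or passes through one of the $p_i$ (case (5)); the successive intersection technique used in the proofs of Theorems~\ref{collinear-colinear-1} and~\ref{s-2points-collinear} then yields a minimal set of generators of $I(\X)$ with transparent degree structure. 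In each case, the Jacobian matrix $\Theta$ is a direct computation, and a compact set of minimal generators of the Jacobian ideal $I$ follows by expanding the $2\times 2$ minors and using Euler's relation to trim redundancies.

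The equality $I(\X)\cap I^t = I(\X)\,I^{t-1}$ can then be checked by a Hilbert series comparison, in the spirit of the argument in Theorem~\ref{s-2points-collinear}, where the Hilbert series of $R/((f)\cap \mathfrak{a}^2)$ and of $R/((f)\mathfrak{a})$ were pitted against each other. Having computed the minimal free resolutions of $R/I(\X)I^{t-1}$ and of the relevant intersections and sums (via a short exact sequence of the form $0\to R/(I(\X)\cap I^t)\to R/I(\X)\oplus R/I^t\to R/(I(\X)+I^t)\to 0$), the equality becomes a Hilbert polynomial identity that one can verify degree by degree.

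The principal obstacle is the case $t=2$, since all known instances of non-trivial Valabrega--Valla torsion in this paper are concentrated in degree two; the crux in each positive case is to show that every form in $I(\X)\cap I^2$ can be rewritten as $\sum f_i g_i$ with $f_i$ a minimal generator of $I(\X)$ and $g_i\in I$. The cases $t=3,4,5$ are expected to be much more routine: because $I$ is $\fm$-primary, a small power $\fm^\ell$ already lies in $I$, so $I^t$ contains $\fm^{\ell t}$ and the required rewriting becomes essentially automatic once the degree exceeds the socle degree of $R/I(\X)$, confirming that in these three configurations the criterion of Proposition~\ref{ATFN} is satisfied and the Jacobian ideal is $\VaVa$-torsion-free.
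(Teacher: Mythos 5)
Your proposal follows essentially the same route as the paper: the negative cases (2) and (3) are exactly Theorem~\ref{collinear-colinear-1} with $r=0,1$, and the positive cases (1), (4), (5) are settled by writing down explicit minimal generators of $I(\X)$ and of the Jacobian ideal and then verifying the finitely many equalities $I(\X)\cap I^t=I(\X)I^{t-1}$ that Proposition~\ref{ATFN} makes sufficient. The only substantive differences are that for configuration (1) the paper computes the relation type of $I/I(\X)$ to be $2$ (so only $t=2$ needs checking, which is why the Remark after Proposition~\ref{ATFN} calls the bound $t\le s$ non-sharp), and that your closing heuristic that $t\ge 3$ becomes ``essentially automatic'' because $I\supseteq\fm^{\ell}$ is not a valid argument (part (b) of that same Remark exhibits relation type equal to $s$), though this does not matter since your Hilbert-series verification is meant to cover those degrees anyway.
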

\begin{proof}
By Theorem~\ref{collinear-colinear-1}, it is enough to show that for the configurations (1),(4),(5), $\VaVa_{J\subset I}=0$.

\underline{Configuration (1).} Note that the ideal of $4$ points in general linear position is generated by two conics. Then the ideal of $\X':=\X\setminus\{p_5\}$ is generated by conics say $q_1,q_2$ in $R=k[x,y,z]$. It is well known that there exists an unique conic $q$ passes through $5$ points. Since $I(\X)\subseteq I(\X')$, it follows that $q=aq_1+bq_2$ for certain uniquely determined nonzero scalars $a,b\in k$.  Since $I(\X)\subseteq I(p_5)=(\ell_1,\ell_2)$, we obtain that $q=L_1\ell_1+L_2\ell_2$, where $L_1,L_2$ are linear forms in $R$. We claim that 
\[I(\X)=(q,q_1\ell_1,q_1\ell_2). \]
Setting $J:=(q,q_1\ell_1,q_1\ell_2)$. One has $J\subseteq I(\X')\cap I(p_5)=I(\X)$. On the other hand, the ideal $J$ is generated by $2\times 2$ minors of the Hilbert-Burch matrix 
\[M=\begin{bmatrix}
q_1 & 0\\
L_1 & \ell_{2}\\
L_2 & \ell_1
\end{bmatrix}.
\]
Therefore,  the minimal free $R$-resolution of $R/J$ has the form  
\begin{equation*}\label{ResI}
0\lar R^2(-4)\stackrel{M}\lar R^2(-3)\oplus R(-2)\lar R\lar  R/ J\lar 0.
\end{equation*}
Thus the  Hilbert series of $R/J$ is
\[\mathrm{HS}(R/J,t)=\dfrac{1+2t+2t^2}{1-t}.\]
Since $I(\X)=I(\X')\cap I(p_5)$, one has a  short exact sequence 
\begin{equation}\label{MV}
0\lar R/I(\X)\lar R/I(\X')\bigoplus R/I(p_5)\lar R/( I(\X'), I(p_5))\lar 0.
\end{equation}
Direct inspection gives
\[R/I(p_5)\simeq k[z]\quad,\quad R/( I(\X'), I(p_5))\simeq k[z]/(z^2).\]
We obtain that 
\[\mathrm{HS}(R/I(\X),t)=\dfrac{1+2t+t^2}{(1-t)}+\dfrac{1}{1-t}-(1+t)=\dfrac{1+2t+2t^2}{(1-t)}.\]
Since $J\subseteq I(\X)$, it follows that $J=I(\X)$. 

By a projective transformation, we may assume that $p_1,p_2,p_3$ are  coordinate points, $p_4=[1:1:1]$ and $p_5=[a:b:1]$ where $a,b\neq 0,1$ and $a\neq b$. Then the defining ideal $I(\X)$ is minimally generated by:
\begin{eqnarray}
\nonumber q&=& (a-b)xy+(-ab+b)xz+(ab-a)yz,\\
\nonumber q_1\ell_1 &=& x^2y-(a+1)xyz+ayz^2,\\
\nonumber q_1\ell_2 &=& xy^2-bxyz-y^2z+byz^2.
\end{eqnarray}
A calculation yields that  the Jacobian ideal $I$ of $I(\X)$ minimally generated by $4$ cubics in $R/I(\X)$. A computation in \cite{Macaulay1} yields that the relation type number of the ideal Jacobian ideal  $I/I(\X)\subseteq R/I(\X)$ is two. Therefore, by \cite[Corollary 2.17]{AA}, it is enough to show that $J\cap I^2\subseteq JI$, which can be check easily.

\underline{Configurations (4),(5).} We apply Theorem~\ref{s-2points-collinear} -- to find the generators of the defining ideal -- and Proposition~\ref{ATFN}. Then $J\cap I^t=JI^{t-1}$ for $t\geq 5$. A calculation shows that $ J\cap I^m=JI^{m-1}$ for $m=2,3,4$, which complete the proof. 
\end{proof}

Now let $\mathbb{X}$ be a set of $6$ points in $\mathbb{P}^2$. There exists only eleven configurations in their geometry \cite{BE} {(we show these configurations schematically in figure~\ref{6points Configuration})}:
\begin{enumerate}\label{11configuration}
	\item [{\rm (1)}] $\mathbb{X}$ is in general linear position.
	\item  [{\rm (2)}] $\mathbb{X}$ is  a $6$-fold collinear configuration. 
	 \item  [{\rm (3)}] $\mathbb{X}$ is  a $5$-fold collinear configuration.
	 \item  [{\rm (4)}] $\mathbb{X}$ is  a $4$-fold collinear configuration.
	\item   [{\rm (5)}] $\mathbb{X}$ is a $4$-fold  collinear configuration such that  the straight line through the remaining two points intersect in a point of $\X$. 
	\item  [{\rm (6)}] $\mathbb{X}$ is a $3$-fold collinear configuration such that the remaining three points are in general linear position . 
	\item[{\rm (7)}] $\X$ is a $3$-fold collinear configuration such that the remaining points are collinear and  the  straight line through them  intersect in a point off $\X$.
	\item[{\rm (8)}] $\X$ is a $3$-fold collinear configuration such that  the straight line through two of  the  remaining  points  intersect in a point of $\X$.
	\item[{\rm (9)}] $\X$ is a $3$-fold collinear configuration such that the two straight line through of  the  remaining  points  intersect in  points of $\X$.
	\item[{\rm (10)}] $\X$ is a $3$-fold collinear configuration such that the  three  straight line through of  the remaining  points  intersect in  points of $\X$.
	\item  [{\rm (11)}]  $\X$   is on an irreducible conic.
\end{enumerate}
\begin{figure}[h]
	\centering
	\includegraphics[width=9cm, height=9cm]{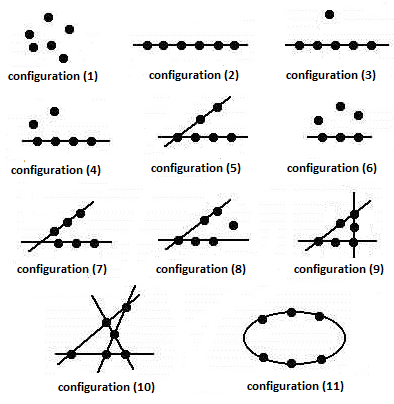}
	\caption{ Configurations of 6 points in $\pp^2$ }\label{6points Configuration}
\end{figure}
For $6=\binom{3+1}{2}$ points in general linear position we study the general case  of $s=\binom{d+1}{2}$ points with $d\geq 2$. 

Let $\X$ be a finite set of $s=\binom{d+1}{2}$ points in general linear position in $\pp^2$. Then  $\X$ has maximal Hilbert function by~\cite{Geramita-Marocia}, that is
\begin{equation}\label{HFG_Points}
{\rm HF}(R/I(\X),t)=\min\left\{s,\binom{t+2}{2}\right\}.
\end{equation} 
Let $\mathcal{N}$ denote the set of all monomials of degree $d$ in $R=k[x,y,z]$ expect the set of $d+4$ monomials 
\[x^{d}, x^{d-1}y, x^{d-2}y^2,\ldots, xy^{d-1}, y^d, x^{d-1}z, x^{d-2}yz, z^{d}.\]
Note that $|\mathcal{N}|=\binom{d+2}{d}-(d+4)=s-3$.
Let $h_i=\sum_{m\in \mathcal{N}}^{s-3}\alpha_i^{(j)}m$ denote a $k$-linear combination of all monomials in $\mathcal{N}$ with coefficient $\alpha_i^{(j)}\in k$ for $1\leq i\leq d+1$ and $1\leq j\leq s-3$. In~\cite{GO}, it is proved that the defining ideal of $s=\binom{d+1}{2}$ points in general linear position  is generated by $d+1$ forms of degree $d$. In the following we find these generators, explicitly. 
\begin{Proposition}\label{GLP(d+1)(2)}
	With assumption and notation as above, the defining ideal of $s=\binom{d+1}{2}$ points in general linear position in  $\pp^2$ is minimally  generated by form of degree $d$ of the form 
	\[ f_i=x^{d-i}y^i+\sum_{m\in \mathcal{N}}\alpha_i^{(j)}m,\,   f_d=x^{d-1}z+\sum_{m\in \mathcal{N}}\alpha_d^{(j)}m,\,  f_{d+1}=x^{d-2}yz+\sum_{m\in \mathcal{N}}\alpha_{d+1}^{(j)}m,  \]
where $1\leq j\leq s-3$	and the coefficient $\alpha_i^{(j)}$  are uniquely determined by the coordinate of the points. 
\end{Proposition}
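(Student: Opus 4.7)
The plan is to combine the maximal Hilbert function (\ref{HFG_Points}) with a normalization of three of the points. Since the points are in general linear position, no three are collinear; in particular $p_1, p_2, p_3$ are not collinear, so after a projective change of coordinates we may assume $p_1 = [1:0:0]$, $p_2 = [0:1:0]$, $p_3 = [0:0:1]$. Evaluating any $f \in I(\X)_d$ at these three points immediately forces the coefficients of $x^d$, $y^d$, and $z^d$ in $f$ to vanish, so $I(\X)_d \subseteq V$, where $V \subset R_d$ denotes the $k$-span of the degree-$d$ monomials other than those three pure powers. This explains \emph{a posteriori} why the forms $f_i$ in the statement can have their ``tails'' supported in $\mathcal{N}$ alone.

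Next, (\ref{HFG_Points}) at $t = d$ gives $\dim_k I(\X)_d = \binom{d+2}{2} - s = d+1$, and consequently $\dim_k V/I(\X)_d = \bigl(\binom{d+2}{2}-3\bigr)-(d+1) = s - 3 = |\mathcal{N}|$. The heart of the argument is then to show that the residue classes of the monomials in $\mathcal{N}$ form a $k$-basis of $V/I(\X)_d$. Granted this, each of the $d+1$ distinguished monomials $x^{d-i}y^i$ (for $1 \leq i \leq d-1$), $x^{d-1}z$, and $x^{d-2}yz$ lies in $V$ (each vanishes at $p_1,p_2,p_3$) and hence admits a unique expression modulo $I(\X)$ as a $k$-linear combination of $\mathcal{N}$; subtracting this expression from the monomial yields $f_i$ with uniquely determined coefficients $\alpha_i^{(j)}$. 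Since the $d+1$ forms so produced have distinct leading monomials, they are $k$-independent in $I(\X)_d$, and a dimension count shows they form a $k$-basis of it. By the Geramita--Orecchia theorem \cite{GO} recalled just before the statement, $I(\X)$ is generated in degree $d$, so $\{f_1,\ldots,f_{d+1}\}$ is a minimal generating set.

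The main obstacle is verifying that $\mathcal{N}$ maps to a basis of $V/I(\X)_d$, equivalently, that the $(s-3)\times(s-3)$ evaluation matrix $M = (m(p_j))_{m \in \mathcal{N},\, 4 \leq j \leq s}$ is non-singular. A bare dimension count only yields $\dim_k V/I(\X)_d = |\mathcal{N}|$, not the spanning property, so one must rule out the existence of a non-trivial $k$-combination of $\mathcal{N}$ lying in $I(\X)_d$. This requires a genericity statement sharper than the literal ``no three collinear'' assumption: a plausible route is to exploit the maximality of the Hilbert function of the residual set $\{p_4,\ldots,p_s\}$ in each degree $\leq d$ and deduce the invertibility of $M$ from a row/column expansion argument, or to proceed by induction on $d$ treating $\mathcal{N}$ as a partial standard basis of $R_d/I(\X)_d$ relative to an appropriate term order. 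One may also appeal directly to the construction in \cite{GO}, where the relevant monomial basis is produced in the course of establishing that $I(\X)$ is generated in degree $d$.
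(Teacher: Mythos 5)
You have correctly reduced the proposition to its real content, but you then stop exactly at that content. Everything in the statement --- the existence, the uniqueness, and (implicitly) the non-vanishing of the coefficients $\alpha_i^{(j)}$ --- is equivalent to the non-singularity of the $(s-3)\times(s-3)$ matrix obtained by evaluating the monomials of $\mathcal{N}$ at $p_4,\ldots,p_s$, i.e.\ to the fact that no non-trivial $k$-combination of $\mathcal{N}$ lies in $I(\X)_d$. You name this as ``the main obstacle'' and then offer only candidate strategies (``a plausible route is\ldots'', ``one may also appeal to\ldots'') without carrying any of them out. That is a genuine gap, not a routine verification: general linear position does not hand you this for an arbitrary monomial complement, and the statement fixes the \emph{specific} set $\mathcal{N}$ (excluding $x^d,\ldots,y^d$, $x^{d-1}z$, $x^{d-2}yz$, $z^d$), so an appeal to the construction in \cite{GO} --- which need not single out this particular complement --- cannot substitute for the computation. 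In the paper this step is done explicitly: after normalizing $p_1,p_2,p_3$ to the coordinate points and $p_4$ to $[1:1:1]$, the evaluation matrix is written down as a concatenation $[A\,|\,B]$ of two structured blocks, its determinant is shown not to vanish using the non-vanishing of the $3\times 3$ minors of the coordinate matrix, and Cramer's rule then produces the $\alpha_i^{(j)}$ uniquely and shows they are non-zero. Until you prove the invertibility of this matrix by some concrete argument, your proof establishes neither existence nor uniqueness of the $f_i$.

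Granting that key lemma, your endgame is a legitimate variant of the paper's. You use $\dim_k I(\X)_d=d+1$ from (\ref{HFG_Points}) together with generation in degree $d$ from \cite{GO} to conclude that the $f_i$, having distinct leading monomials outside $\mathcal{N}$, form a basis of $I(\X)_d$ and hence a minimal generating set. The paper instead avoids quoting \cite{GO} inside the proof: it sets $J=(f_1,\ldots,f_{d+1})$, computes a Gr\"obner basis of $J$ in deg-revlex (one extra $S$-polynomial reduction contributes an element with leading term $y^dz$), reads off the Hilbert function of $R/J$ from the initial ideal, and matches it against (\ref{HFG_Points}) to get $J=I(\X)$. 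Your route is shorter but leans on the external generation result; the paper's is self-contained and, as a by-product, exhibits the full initial ideal of $I(\X)$. Your preliminary remark that forms of $I(\X)_d$ have no $x^d$, $y^d$, $z^d$ terms is correct but does not advance the argument.
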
 
\begin{proof}
	We may assume that the points are the columns of the matrix 
	\[M:=\begin{bmatrix}
	1&0&0&1&a_1&a_2&\cdots&a_{s-4}\\
	0&1&0&1&b_1&b_2&\cdots&b_{s-4}\\
	0&0&1&1&1&1&\cdots&1
	\end{bmatrix}
	,\]
	Since the points are in general linear position, all $3\times 3$ minors of $M$ are non-zero. Consider the following matrices 
\[\tiny{
	A=\left[
	\begin{array}{ll|lll|lll|lllll}
	1&1&1&1&1&1&\cdots&1&1&1&\cdots&1&1\\
	a_1&b_1&a_1^2&a_1b_1&b_2^2&&\ldots&&a_1^{d-2}&a_1^{d-3}b_1&\cdots&a_1b_1^{d-3}&b_1^{d-2}\\
	a_2&b_2&a_2^2&a_2b_2&b_2^2&&\ldots&&a_2^{d-2}&a_2^{d-3}b_2&\cdots&a_2b_2^{d-3}&b_2^{d-2}\\
	\vdots&\vdots&\vdots&\vdots&\vdots&&\cdots&&\vdots&\vdots&\cdots&\vdots&\vdots\\
	a_{s-4}&b_{s-4}&a_{s-4}^2&a_{s-4}b_{s-4}&b_{s-4}^2&&\ldots&&a_{s-4}^{d-2}&a_{s-4}^{d-3}b_{s-4}&\cdots&a_{s-4}b_{s-4}^{d-3}&b_{s-4}^{d-2}\\
	\end{array}
	\right]
}\]
	and 
	\[B:=\begin{bmatrix}
	1&1&\cdots&1&1\\
	b_1^{d-1}&a_1b_1^{d-2}&\cdots&a_1^{d-4}b_1^{3}&a_1^{d-3}b_1^{2}\\
	\vdots&\vdots&\cdots&\vdots&\vdots\\
	b_{s-4}^{d-1}&a_{s-4}b_{s-4}^{d-2}&\cdots&a_{s-4}^{d-4}b_{s-4}^{3}&a_{s-4}^{d-3}b_{s-4}^{2}
	\end{bmatrix}
	.\]
		The matrix $A$ and $B$ are of size $(s-3)\times \dfrac{d^2-d-2}{2}$ and $(s-3)\times (d-2)$, respectively. Thus the concatenation of $A$ and $B$ is a square matrix of size $(s-3)\times (s-3)$.  To find $\alpha_i^{(j)}$ it is enough to solve the matrix equations
	\begin{equation}\label{Mat-equi1}
	\left[
	\begin{array}{l|l}
	A&B
	\end{array}
	\right]
	\begin{bmatrix}
	\alpha_i^{(1)}\\
	\alpha_i^{(2)}\\
	\vdots\\
	\alpha_i^{(s-4)}\\
	\alpha_i^{(s-3)}\\
	\end{bmatrix}=
	\begin{bmatrix}
	-1\\
	-a_1^{d-i}b_1^i\\
	\vdots\\
	-a_{s-2}^{d-i}b_{s-2}^i\\
	-a_{s-3}^{d-i}b_{s-3}^i\\
	-a_{s-4}^{d-i}b_{s-4}^i	
	\end{bmatrix}.
	\end{equation}
	for $i=1,\ldots, d-1$ and 
	\begin{equation}\label{Mat-equi2}
	\left[
	\begin{array}{l|l}
	A&B
	\end{array}
	\right]
	\begin{bmatrix}
	\alpha_d^{(1)}\\
	\alpha_d^{(2)}\\
	\vdots\\
	\alpha_d^{(s-4)}\\
	\alpha_d^{(s-3)}\\
	\end{bmatrix}=
	\begin{bmatrix}
	-1\\
	-a_1^{d-1}\\
	\vdots\\
	-a_{s-2}^{d-1}\\
	-a_{s-3}^{d-1}\\
	-a_{s-4}^{d-1}	
	\end{bmatrix} \quad , \quad 
	\left[
	\begin{array}{l|l}
	A&B
	\end{array}
	\right]
	\begin{bmatrix}
	\alpha_{d+1}^{(1)}\\
	\alpha_{d+1}^{(2)}\\
	\vdots\\
	\alpha_{d+1}^{(s-4)}\\
	\alpha_{d+1}^{(s-3)}\\
	\end{bmatrix}=
	\begin{bmatrix}
	-1\\
	-a_1^{d-2}b_1\\
	\vdots\\
	-a_{s-2}^{d-2}b_{s-2}\\
	-a_{s-3}^{d-2}b_{s-3}\\
	-a_{s-4}^{d-2}b_{s-4}	
	\end{bmatrix}.
	\end{equation}

	Since all $3\times 3$ minors  of $M$ are non-zero, the determinant of the matrix $\left[
	\begin{array}{l|l}
	A&B
	\end{array}
	\right]$ dose not vanish. Therefore, the systems~(\ref{Mat-equi1}) and~(\ref{Mat-equi2}) has unique solutions. Furthermore, by Cramer's rule, $\alpha_i^{(j)}\neq 0$. 
	Consider the ideal $J\subset R=k[x,y,z]$ generated by the form of degree $d$ as in the statement, where $\alpha_i^{(j)}$ are uniquely determined solution of (\ref{Mat-equi1}) and (\ref{Mat-equi2}). Thus the ideal $J$ vanishes on $\X$ and hence $J\subseteq I(\X)$.  We show that $J$ and $I(\X)$ have the same Hilbert function, hence must be equal. 
	
	We claim that the Gr\"obner basis of $J$ with respect to the deg-revlex term ordering with $x>y>z$ is the set 
	\[G\cup \{ y^dz+z\sum_{m\in \mathcal{N}}\beta^{(j)}m\}, \]
	where $G$ is the generating set of $J$ and $\beta^{(j)}$ is 
	a certain polynomial expression of the $\alpha$'s.  
	For this,  we consider the $S$-polynomials of elements in this set. First, we look at the $S$-polynomial of  $f_1$ and $f_2$
	\[\mathrm{S}(f_1,f_2)=yf_1-xf_2=y\sum_{m\in \mathcal{N}}\alpha_1^{(j)}m-x\sum_{m\in \mathcal{N}}\alpha_2^{(j)}m,\]
	which upon division by the generators of $J$ is reduces to $f_{d+2}:=y^dz+z\sum_{m\in \mathcal{N}}\beta^{(j)}m$, where $\beta^{(j)}$ is a certain polynomial like expression in the $\alpha$'s. 
Since the initial monomial $y^dz$ of $f_{d+2}$ is not divisible by the initial term of any generators of $J$ we add $f_{d+2}$ to the generating set of $J$.  By reducing the terms in  the $\mathrm{S}$-polynomial $S(f_i,f_j)$ with $1\leq i<j\leq d+2$
which are divisible by the initial term of $f_k\ (k=1,\ldots,d+2)$, we conclude that  $S(f_i,f_j)$ reduces to zero which prove the claim.

Thus, the following set of monomials is a minimal generating set for the initial ideal of $J$:
	\[x^{d-1}y, x^{d-2}y^2, \ldots, xy^{d-1},x^{d-1}z,x^{d-2}yz, y^dz.\] 
	Hence for $r\geq d$, 
	\[\dim_k(J)_r=\binom{r+2}{r}-\binom{d+1}{2}. \]
	Therefore for any $r\geq 0$,
	\[\mathrm{HF}(R/J,r)=\left\{
	\begin{array}{ll}
	\binom{d+1}{2},& \ r\geq d\\
	\binom{r+2}{r}, & 0\leq r\leq d-1.
	\end{array}
	\right.
	\]
	Therefore, $I(\X)=J$ follows by  (\ref{HFG_Points}). 
\end{proof}
\begin{Proposition}\label{VVGLP(d+1)2}
	Let $\X\subseteq \pp^2$ be a set of six points in general linear position. Then $I_2(\Theta)=(x,y,z)^4$, where $\Theta$ is the Jacobian matrix of $I(\X)$. In particular, the Jacobian ideal of $I(\X)$ is $\VaVa$-torsion-free.  
\end{Proposition}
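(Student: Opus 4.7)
The second assertion follows formally from the first: by Proposition~\ref{GLP(d+1)(2)} (with $d=3$), the defining ideal $I(\X)$ is minimally generated in the single degree $m=3$; so if $I_2(\Theta)=\fm^4$, then the Jacobian ideal $I=(I(\X),I_2(\Theta))$ contains $\fm^{4}=\fm^{n(m-1)}$ for $n=2$, and the criterion from \cite{AAR} recalled in Section~\ref{P1} delivers $\VaVa_{I(\X)\subset I}=0$. Thus the whole content of the proposition is the equality $I_2(\Theta)=\fm^4$.

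The inclusion $I_2(\Theta)\subseteq \fm^4$ is immediate, since $\Theta$ has quadratic entries and so its $2\times 2$ minors are quartic forms that vanish at the origin. For the reverse inclusion $\fm^4\subseteq I_2(\Theta)$, I would first apply a projective linear transformation to normalize four of the six points to $[1:0:0]$, $[0:1:0]$, $[0:0:1]$, $[1:1:1]$, writing the remaining two as $p_5=[a_1:b_1:1]$ and $p_6=[a_2:b_2:1]$. The general linear position hypothesis confines $(a_1,b_1,a_2,b_2)$ to a Zariski-open locus $U\subset \AA^4_k$ cut out by the non-vanishing of finitely many polynomial relations (no three of the six points collinear). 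Using Proposition~\ref{GLP(d+1)(2)} with $d=3$, I would then write out the four cubic generators $f_1,f_2,f_3,f_4$ of $I(\X)$ explicitly, with coefficients $\alpha_i^{(j)}$ being the rational functions of $(a_1,b_1,a_2,b_2)$ determined by Cramer's rule applied to the linear systems (\ref{Mat-equi1}) and (\ref{Mat-equi2}), and assemble the $4\times 3$ Jacobian matrix $\Theta$ of quadratic forms.

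The proof then reduces to showing that the $\binom{4}{2}\binom{3}{2}=18$ quartic $2\times 2$ minors of $\Theta$ span the entire space $R_4$, which has $k$-dimension $15$, at every point of $U$. Equivalently, one must verify that the $18\times 15$ coefficient matrix of these minors with respect to the monomial basis of $R_4$ has rank $15$ throughout $U$. My plan is to exhibit an explicit $15\times 15$ submatrix whose determinant, as a polynomial in $(a_1,b_1,a_2,b_2)$, is non-zero on the general linear position locus $U$; I expect this determinant to factor, up to a nonzero scalar, into a product of the irreducible polynomials defining the closed hypersurfaces of $\AA^4\setminus U$ (the various collinearity and coincidence conditions of the form $a_ib_j-a_jb_i=0$, $a_i-b_i=0$, $a_i-1=0$, and so on). This last step is the main obstacle, as one must certify rank maximality at \emph{every} point of $U$ rather than merely generically; it is best handled either by a symbolic computation in a system such as \textsc{Macaulay2} that displays the factorization of the chosen $15\times 15$ minor, or by a careful analysis of the block structure inherited from the Cramer-rule presentation of the coefficients $\alpha_i^{(j)}$.
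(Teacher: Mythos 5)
Your overall architecture coincides with the paper's: normalize the six points to a projective frame plus two free points, take the four explicit cubic generators from Proposition~\ref{GLP(d+1)(2)} with $d=3$, and reduce the statement to showing that the eighteen quartic $2\times 2$ minors of the $4\times 3$ matrix $\Theta$ span the $15$-dimensional space $R_4$ at every admissible parameter value; your derivation of the second assertion from the first via the single-degree criterion of \cite{AAR} recalled in Section~\ref{P1} is also the intended one. Where you diverge is in how the rank-$15$ claim is certified, and that is exactly the step you leave unexecuted. The paper does it by a leading-term triangularization: it fixes the lex order with $x>y>z$, selects eight minors $g_1,\dots,g_8$ whose initial terms are $x^4,\,x^3y,\,x^3z,\,x^2y^2,\,x^2yz,\,x^2z^2,\,xy^3,\,xy^2z$, and then produces $g_9,\dots,g_{15}$ with initial terms $xyz^2,\,xz^3,\,y^4,\,y^3z,\,y^2z^2,\,yz^3,\,z^4$ by reducing further minors against the earlier ones, with an explicit case split (e.g.\ on whether $2\alpha_3^{(3)}-4\alpha_1^{(2)}$ vanishes) that swaps in a different minor whenever a needed leading coefficient could be zero. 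The resulting $15\times 15$ coefficient matrix is upper triangular with nonzero diagonal, which settles the rank uniformly over the general-position locus.

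Your proposed alternative --- exhibit one $15\times 15$ submatrix of the $18\times 15$ coefficient matrix whose determinant is nonvanishing on all of $U$ --- has a genuine weak point that you only half-acknowledge: there is no a priori reason that any \emph{single} maximal minor is nonzero on all of $U$; the rank can be $15$ everywhere on $U$ while each individual $15\times 15$ minor vanishes on some stratum of $U$. The plan may therefore require covering $U$ by the nonvanishing loci of several maximal minors, and the hoped-for factorization into the collinearity polynomials is an expectation rather than an argument. Until that determinant (or family of determinants) is actually produced and its zero locus shown to miss $U$, the proof is incomplete at precisely the point where all the content lies; the paper's minor-by-minor leading-term selection with its case analysis is the mechanism that discharges this obligation.
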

{\begin{proof}
We may assume that the points are columns of the matrix 
\[M:=\begin{bmatrix}
1&0&0&1&a&c\\
0&1&0&1&b&d\\
0&0&1&1&1&1
\end{bmatrix}
.\]
Since the points are in general linear position, the following relations come out
\begin{equation}\label{deteq}
a,b,c,d\neq 0,1\quad, \quad a\neq b\neq c\neq d\quad,\quad a-b\neq c-d.
\end{equation}
By Proposition~\ref*{GLP(d+1)(2)}, the ideal $I(\X)$ is generated by the cubics:
\begin{eqnarray}
\nonumber f_1 &=& x^2y+\alpha_1^{(1)}xz^2+\alpha_1^{(2)}y^2z+\alpha_1^{(3)}yz^2\\
\nonumber f_2 &=& xy^2+\alpha_2^{(1)}xz^2+\alpha_2^{(2)}y^2z+\alpha_2^{(3)}yz^2\\
\nonumber f_3 &=& x^2z+\alpha_3^{(1)}xz^2+\alpha_3^{(2)}y^2z+\alpha_3^{(3)}yz^2\\
\nonumber f_4 &=& xyz+\alpha_4^{(1)}xz^2+\alpha_4^{(2)}y^2z+\alpha_4^{(3)}yz^2
\end{eqnarray}
where $\alpha_i^{(j)}$ for $i=1,2,3,4,\ j=1,2,3$ are solutions of the matrix equations 
\[
\begin{bmatrix}
1&1&1\\
a&b^2&b\\
c&d^2&d
\end{bmatrix}
\begin{bmatrix}
\alpha_1^{(1)}\\
\alpha_1^{(2)}\\
\alpha_1^{(3)}
\end{bmatrix}
=\begin{bmatrix}
-1\\
-a^{2}b\\
-c^{2}d
\end{bmatrix}\quad , \quad 
\begin{bmatrix}
1&1&1\\
a&b^2&b\\
c&d^2&d
\end{bmatrix}
\begin{bmatrix}
\alpha_2^{(1)}\\
\alpha_2^{(2)}\\
\alpha_2^{(3)}
\end{bmatrix}
=\begin{bmatrix}
-1\\
-ab^2\\
-cd^2
\end{bmatrix}
\]
\[
\begin{bmatrix}
1&1&1\\
a&b^2&b\\
c&d^2&d\end{bmatrix}
\begin{bmatrix}
\alpha_3^{(1)}\\
\alpha_3^{(2)}\\
\alpha_3^{(3)}
\end{bmatrix}
=\begin{bmatrix}
-1\\
-a^{2}\\
-c^{2}
\end{bmatrix}\quad , \quad 
\begin{bmatrix}
1&1&1\\
a&b^2&b\\
c&d^2&d\end{bmatrix}
\begin{bmatrix}
\alpha_4^{(1)}\\
\alpha_4^{(2)}\\
\alpha_4^{(3)}
\end{bmatrix}
=\begin{bmatrix}
-1\\
-ab\\
-cd
\end{bmatrix}
.\]
By relations (\ref{deteq}),  all $3\times 3$ minors of the following matrix is non-zero. 
\[
\begin{bmatrix}
1&1&1&-1&-1&-1&-1\\
a&b&b^2&-a^2b&-ab^2&-a^2&-ab\\
c&d&d^2&-c^2d&-cd^2&-c^2&-cd
\end{bmatrix}.
\]
 Therefore, by Cramer's rule,  $\alpha_i^{(j)}\neq 0$ which is a certain polynomials expression of the coordinate of last two points. 
The Jacobian matrix of $I(\X)$ is 
\[
\Theta=\begin{bmatrix}
2xy+\alpha_1^{(1)}z^2& x^2+2\alpha_1^{(2)}yz+\alpha_1^{(3)}z^2& \alpha_1^{(1)}xz+2\alpha_1^{(2)}y^2+2\alpha_1^{(3)}yz\\
y^2+\alpha_2^{(1)}z^2& 2xy+2\alpha_2^{(2)}yz+\alpha_2^{(3)}z^2& \alpha_2^{(1)}xz+2\alpha_2^{(2)}y^2+2\alpha_2^{(3)}yz\\
2xz+\alpha_3^{(1)}z^2& 2\alpha_3^{(2)}yz+\alpha_3^{(3)}z^2& x^2+2\alpha_3^{(1)}xz+\alpha_3^{(2)}y^2+2\alpha_3^{(3)}yz\\
yz+\alpha_4^{(1)}z^2& xz+2\alpha_4^{(2)}yz+\alpha_4^{(3)}z^2&  xy+2\alpha_4^{(1)}xz+\alpha_4^{(2)}y^2+2\alpha_4^{(3)}yz
\end{bmatrix}
.\]
Consider on the ring $R=k[x,y,z]$ the lex term ordering with $x>y>z$. Denote by $[i\ j\ | \ k \ l ]$ a $2\times 2$ minor of the Jacobian matrix $\Theta$, where $1\leq i<j\leq 4,\, 1\leq k<l\leq 3$. Setting
\[g_1=[1\ 3\ |\ 2\ 3]\ , \ g_2=[1\ 4\ |\ 2\ 3]\ , \  g_3=[3\ 4\ |\ 2\ 3]\ , \ g_4=[2\ 4\ |\ 2\ 3],\]
\[g_5=[3\ 4\ |\ 1\ 3]\ , \ g_6=[3\ 4\ |\ 1\ 2]\ , \  g_7=[2\ 4\ |\ 1\ 3]\ , \ g_8=[2\ 4\ |\ 1\ 2]. \]
 The initial terms of $g_1,\ldots,g_8$ are $x^4,\, x^3y,\, x^3z,\, x^2y^2,\, x^2yz,\, x^2z^2,\, xy^3,\, xy^2z$, respectively.
We have 
\[ [1\ 3\ |\ 1\ 2]=-2x^3z-\alpha_3^{(1)}x^2z^2+4\alpha_3^{(2)}xy^2z+(2\alpha_3^{(3)}-4\alpha_1^{(2)})xyz^2-2\alpha_1^{(3)}xz^3+\lambda_1yz^3+\lambda_2z^4,\]
where $\lambda_1=2\alpha_1^{(1)}\alpha_3^{(2)}-2\alpha_1^{(2)}\alpha_3^{(1)}$ and $\lambda_2=\alpha_1^{(1)}\alpha_3^{(3)}-\alpha_1^{(3)}\alpha_3^{(1)}$. If $(2\alpha_3^{(3)}-4\alpha_1^{(2)})\neq 0$, then we reduce  the terms $x^3z,\, x^2z^2,\, xy^2z$ by $g_1,\ldots,g_8$ which    gives the form, say $g_9$ and the latter has initial term $xyz^2$. If  $(2\alpha_3^{(3)}-4\alpha_1^{(2)})= 0$, then we choose another minor which contain the term $xyz^2$ and apply the same process as above. We make the same process to find the forms $g_{10},\ldots,g_{15}$ which have initial terms $$xz^3,\, y^4,\, y^3z,\, y^2z^2,\, yz^3,\, z^ 4,$$ respectively. 
Thus, the ideal generated by  $g_1,\ldots,g_{15}$ is contained  in the ideal $I_2(\Theta)$ and $g_i$'s are linearly independents over $k$ since its  coefficient matrix is upper triangular matrix which is non-singular by above construction.    
 Since $I_2(\Theta)\subseteq (x,y,z)^4$, it follows that  $I_2(\Theta)=(x,y,z)^4$. 
The second assertion follows by~\cite[Lemma 1.4]{AAR}. 
\end{proof}}
\begin{Remark}\rm
	By a similar argument as in Proposition~\ref{VVGLP(d+1)2}, we can show that $I_2(\Theta)=(x,y,z)^{2d-2}$ for $d=4,5$, which implies that the Jacobian ideal of the defining ideal of  a set of $10$ and $15$ points is $\VaVa$-torsion free. 
\end{Remark}
we derive the following 
\begin{Conjecure}
		Let $\X\subseteq \pp^2$ be a set of $s=\binom{d+1}{2}$ points in general linear position. If $d\geq 6$, then $(x,y,z)^{2d-2}\subseteq I$, where $I$ is the Jacobian ideal of $I(\X)$. 
\end{Conjecure}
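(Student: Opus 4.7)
The plan is to prove $\fm^{2d-2}\subseteq I$, where $\fm=(x,y,z)$. Since $\fm^{2d-2}$ is generated in degree $2d-2$, it suffices to establish $R_{2d-2}=I_{2d-2}$. Because $2d-2\geq d$, the Hilbert function identity~(\ref{HFG_Points}) gives $\dim_k(R/I(\X))_{2d-2}=s=\binom{d+1}{2}$, and evaluation at the $s$ points of $\X$ yields an isomorphism $(R/I(\X))_{2d-2}\cong k^s$. As $I=I(\X)+I_2(\Theta)$ and elements of $I(\X)$ vanish on $\X$, the statement is equivalent to the surjectivity of the evaluation map
\[
\mathrm{ev}\colon (I_2(\Theta))_{2d-2}\longrightarrow k^s.
\]

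A simple count shows why the argument of Proposition~\ref{VVGLP(d+1)2}---which proves the \emph{equality} $I_2(\Theta)=\fm^{2d-2}$ for $d\leq 5$---cannot extend verbatim: the $(d+1)\times 3$ Jacobian $\Theta$ has $3\binom{d+1}{2}=3s$ many $2\times 2$ minors, whereas $\dim_k R_{2d-2}=\binom{2d}{2}$, and one checks that $3s<\binom{2d}{2}$ precisely when $d\geq 6$. Hence the generators of $I(\X)$ are indispensable from $d=6$ onward, although $3s$ candidate forms still amply exceed the $s$-dimensional target $k^s$ of $\mathrm{ev}$, leaving generous room for surjectivity.

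The concrete steps I propose are the following. \emph{(i)} Use Proposition~\ref{GLP(d+1)(2)} to write $\Theta$ explicitly as a matrix of polynomials in $x,y,z$ with coefficients polynomial in the $\alpha_i^{(j)}$. \emph{(ii)} Exploit the smoothness of $\X$: the matrix $\Theta(p_i)$ has rank $2$ at every $p_i\in \X$, so its Plücker vector (the $3s$-tuple of evaluated $2\times 2$ minors) is nonzero, yielding a nonzero $i$-th row of the $s\times 3s$ \emph{evaluation matrix} $V=(M(p_i))_{i,M}$ indexed by points and minors. \emph{(iii)} Show that $V$ has full row rank $s$---equivalently, that for every $p_i$ some $k$-linear combination of minors of $\Theta$ is a separator for $p_i$ inside $(I_2(\Theta))_{2d-2}$. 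An alternative, more in the spirit of Proposition~\ref{VVGLP(d+1)2}, is to assemble $\binom{2d}{2}$ elements of $I_{2d-2}$---namely, the $3s$ minors of $\Theta$ reduced modulo the $f_i$, supplemented by appropriate products $m\cdot f_i$ with $\deg m=d-2$---whose leading monomials exhaust all of $R_{2d-2}$; one verifies that the monomials of degree $2d-2$ not divisible by any leading monomial of the $f_i$ number only $3d-2$, so the $3s$ minors have considerable slack in covering them.

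The main obstacle is step \emph{(iii)}: guaranteeing the required non-degeneracy uniformly across the whole locus of configurations in general linear position. Semicontinuity of matrix rank shows that the locus where $\mathrm{ev}$ is surjective is Zariski open in the (irreducible) parameter space of general-linear-position configurations, so verifying one sufficiently generic case---computationally for $d=6$, for instance---puts the conclusion on an open dense subset. However, extending it to \emph{every} configuration in general linear position demands ruling out a potentially nonempty proper closed "bad" subvariety. The technical heart is the coefficient bookkeeping: the coefficients arising from reductions are polynomial expressions in the $\alpha_i^{(j)}$ of Proposition~\ref{GLP(d+1)(2)}, which are themselves rational functions of the point coordinates, and one must guarantee that they never vanish simultaneously on any configuration in general linear position. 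This bookkeeping grows unwieldy as $d$ increases and is the principal reason the statement has been left as a conjecture rather than a theorem.
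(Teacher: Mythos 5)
This statement is labeled a \emph{Conjecture} in the paper and no proof of it is given there, so there is no argument of the authors to compare yours against; I can only assess your attempt on its own terms. Your setup is sound: the reduction of $\fm^{2d-2}\subseteq I$ to the surjectivity of the evaluation map $(I_2(\Theta))_{2d-2}\to k^s$ is correct (since $2d-2\geq d$, the maximal Hilbert function~(\ref{HFG_Points}) gives $(R/I(\X))_{2d-2}\cong k^s$ with kernel exactly $I(\X)_{2d-2}$, and $I_{2d-2}=I(\X)_{2d-2}+(I_2(\Theta))_{2d-2}$ because the minors generate $I_2(\Theta)$ in degree $2d-2$). Your count showing $3\binom{d+1}{2}<\binom{2d}{2}$ exactly for $d\geq 6$ is also correct and nicely explains why the method of Proposition~\ref{VVGLP(d+1)2} (proving the stronger equality $I_2(\Theta)=\fm^{2d-2}$) must break down and why the conjecture is phrased for $I$ rather than for $I_2(\Theta)$.

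However, this is not a proof: your step \emph{(iii)} is the entire content of the conjecture and is left open. The smoothness argument in step \emph{(ii)} only shows that each row of the $s\times 3s$ evaluation matrix is nonzero, which says nothing about its rank being $s$; and the semicontinuity argument establishes surjectivity only on a Zariski-open subset of the parameter space of configurations, whereas ``general linear position'' is itself an open condition with no a priori reason to be contained in that good locus --- ruling out a nonempty bad subvariety is precisely what must be done, and you do not do it (nor do you actually carry out the single generic verification for $d=6$ that would at least anchor the open-dense statement). A smaller inaccuracy: the number of monomials of degree $2d-2$ outside the ideal generated by the leading terms of the $f_i$ of Proposition~\ref{GLP(d+1)(2)} is not $3d-2$; using the full Gr\"obner basis it is $\binom{d+1}{2}$ (the Hilbert function value), and using only the generators' leading terms it is $\tfrac{(d+4)(d-1)}{2}$. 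This does not destroy your qualitative ``slack'' remark, but it signals that the combinatorial bookkeeping in the alternative route is not yet under control. In short, the proposal is a reasonable reformulation of the conjecture, not a resolution of it --- as you yourself acknowledge in your closing paragraph.
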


Finally, we characterize $\VaVa$-torsion freeness of six points. 
\begin{Proposition}\label{6points}
	Let $\mathbb{X}$ be a set of $6$ points in $\mathbb{P}_k^2$. Then the Jacobian ideal of  $I(\X)$ is not $\VaVa$-torsion-free if and only if $\X$ is one of the configurations {\rm (2),(3),(7),(11)}.
\end{Proposition}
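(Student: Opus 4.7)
The strategy is to examine each of the eleven configurations individually, invoking the results of Section~\ref{P2} where applicable. For the ``not $\VaVa$-torsion free'' direction, configurations (2) and (3) are immediate instances of Theorem~\ref{collinear-colinear-1} with $s=6$ and $r=0$ or $1$. Configurations (7) and (11) both place the six points on a conic $Q$ (reducible in (7), irreducible in (11)); in either case the Hilbert function of $R/I(\X)$ equals $(1,3,5,6,6,\ldots)$, so $I(\X)=(Q,F)$ is a complete intersection of type $(2,3)$ (the Bezout count $2\cdot 3=6$ matches), the Jacobian matrix is $3\times 2$, and its three $2\times 2$ minors $M_1,M_2,M_3$ are cubics which together with $Q,F$ minimally generate the Jacobian ideal $I$. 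The plan is to normalize coordinates (e.g.\ $Q=xy$ in (7) and a smooth diagonal model in (11)) and to mimic the dual-curve construction in the proof of Theorem~\ref{collinear-colinear-1}, exhibiting a low-degree explicit witness in $I(\X)\cap I^2\setminus I(\X)I$; this manifests the non-linear-type behavior of $I/I(\X)$ over $R/I(\X)$.

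For the ``$\VaVa$-torsion free'' direction, configuration (1) follows directly from Proposition~\ref{VVGLP(d+1)2} together with \cite[Lemma~1.4]{AAR}. For configurations (4), (5), (6), (8), (9), (10), the plan is to apply Proposition~\ref{ATFN}, which reduces the verification to the finite list of equalities $I(\X)\cap I^t=I(\X)I^{t-1}$ for $2\le t\le 6$. For each configuration, minimal generators of $I(\X)$ are computed by the same technique used in Theorems~\ref{collinear-colinear-1}, \ref{s-2points-collinear}, \ref{s-3} and Proposition~\ref{5-points}: decompose $\X=\X_1\cup\X_2$ with $\X_1$ a collinear sub-configuration, use the Mayer--Vietoris sequence
\[0\lar R/I(\X)\lar R/I(\X_1)\oplus R/I(\X_2)\lar R/(I(\X_1)+I(\X_2))\lar 0\]
to fix the Hilbert series, and extract a Hilbert--Burch presentation. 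A projective change of coordinates then normalizes everything and makes the Jacobian ideal $I$ explicit, at which point the five equalities are checked directly (with Macaulay2 support, as in Proposition~\ref{5-points}).

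The main obstacle is configuration (11): the irreducibility of the conic prevents any factorization of $Q$ into linear forms, so the line-by-line monomial bookkeeping used throughout Section~\ref{P2} is unavailable, and the explicit torsion element must be extracted from the complete-intersection presentation $(Q,F)$ and the Koszul-like syzygies among $Q,F,M_1,M_2,M_3$ rather than from an ad hoc monomial construction. A secondary, more clerical burden is the sheer number of subcases in the torsion-free direction: since the bound $t\le s=6$ of Proposition~\ref{ATFN} is not known to be sharp, each of (4), (5), (6), (8), (9), (10) demands its own normalization and its own finite check.
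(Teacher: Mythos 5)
Your handling of configurations (1), (2), (3) matches the paper, and your plan for (4) and (5) --- Proposition~\ref{ATFN} to truncate at $t\le 6$, generators via Theorem~\ref{s-2points-collinear}, and a direct computation of the remaining intersections $I(\X)\cap I^m$ for $m=2,\dots,5$ --- is exactly what the paper does. The genuine gap is in the non-torsion-free direction for configurations (7) and (11). You correctly identify $I(\X)=(Q,F)$ as a complete intersection of a conic and a cubic (citing the same structure the paper takes from \cite{BE}), but you then only announce a \emph{plan} to ``exhibit a low-degree explicit witness'' by mimicking the dual-curve computation of Theorem~\ref{collinear-colinear-1}, and you concede outright that for the irreducible conic of (11) this plan fails because $Q$ does not factor into linear forms. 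No witness is produced, so the implication is not established for either configuration. The paper's argument here is short and completely insensitive to the reducibility of the conic: since the Jacobian ideal $I$ is $\fm$-primary, one has $I\cap k[z]\neq 0$, so one takes $z^n\in I$ with $n$ minimal and argues that $z^{n-1}f$ lies in $I(\X)\cap I^2$ but, by the minimality of $n$, not in $I(\X)I$. The only inputs are the presentation $I(\X)=(f,g)$ and zero-dimensionality of $I$; neither the dual curve nor the Koszul syzygies among $Q,F,M_1,M_2,M_3$ enter. Your diagnosis of (11) as ``the main obstacle'' is a symptom of having picked the wrong mechanism rather than of any real difficulty in that configuration.

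On the torsion-free direction you also diverge from the paper for (6), (8), (9), (10), though here your route is legitimate rather than gappy. The paper does not run the $t=2,\dots,6$ check for these four cases; instead it uses the generators supplied by Theorem~\ref{s-3}, exhibits fifteen linearly independent quartics among the $2\times 2$ minors of the Jacobian matrix to conclude $I_2(\Theta)=(x,y,z)^4$, and then invokes \cite[Lemma 1.4]{AAR} (the ideals being generated in single degree $3$ with $\fm^{2(3-1)}\subseteq I$), exactly as in Proposition~\ref{VVGLP(d+1)2} for general position. Your ATFN-plus-computation approach would in principle settle these cases too, at the cost of four additional normalizations and four more rounds of intersection computations; the structural criterion is cleaner and is what actually appears in the paper. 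But unlike the (7)/(11) step, this difference is one of efficiency, not of correctness.
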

\begin{proof}
The configurations (2) and (3) follows by Theorem~\ref{collinear-colinear-1}.  The defining ideal $I(\X)=(f,g)$ of $\X$  for the configurations (7) and (11) minimally generated by a conic $f$ and a cubic $g$ (\cite{BE}). The Jacobian ideal $I$ of $J$ is zero dimensional ideal.
Thus $I\cap k[z]\neq 0$. We may assume that  $z^n\in I$ and $n$ is minimum with this property. Then $z^{n-1}f\in J\cap I^2$ and dose not belong to $JI$ by the minimality of $n$. Therefore, $\VaVa_{J\subset I}\neq 0$. 

Now we prove that  $\VaVa=0$ for the remaining seven configurations. 

\underline{Configuration (1)}. Follows by Proposition~\ref{VVGLP(d+1)2}.

\underline{Configuration (4),(5). }
we use Theorem~\ref{s-2points-collinear} -- to find the generators of the defining ideal -- and  Proposition(\ref{ATFN}). Then $J\cap I^t=JI^{t-1}$ for $t\geq 6$. A computation in~\cite{Macaulay1} shows that $ J\cap I^m=JI^{m-1}$ for $m=2,3,4,5$.  

\underline{Configuration (6)}. By Theorem~\ref{s-3}, the ideal $I(\X)$ is generated by  the $4$ forms of degree $3$.  The same argument as in Proposition~\ref{VVGLP(d+1)2}  implies that $I_2(\Theta)=(x,y,z)^4$, where $\Theta$ is the Jacobian matrix of $I(\X)$. Therefore, the assertion follows by~\cite[Lemma 1.4]{AAR}.

\underline{Configurations (8),(9),(10)}. 
By a projective transformation, we may assume that the  points in configurations (8) , (9) and (10) are the columns of the matrices 
\[
M_8:=\begin{bmatrix}
1&0&1&1&a&b\\
0&1&1&1&0&0\\
0&0&0&1&1&1\\
\end{bmatrix},\, \ 
M_9:=\begin{bmatrix}
1&0&1&0&a&0\\
0&1&1&0&0&b\\
0&0&0&1&1&1\\
\end{bmatrix},
\]
\[
M_{10}:=\begin{bmatrix}
1&0&1&a&b&a\\
0&1&1&0&0&a-b\\
0&0&0&1&1&1\\
\end{bmatrix}.
\]
where $a,b\in k$ with $a,b\neq 0,1$ and $a\neq b$.
By Theorem~\ref{s-3}, rather by its proof,mn the defining ideal  of these configurations are:
\begin{eqnarray}
\nonumber I(\X_8)&=&(x^2y-xy^2,\, x^2z-(a+b)xz^2+\lambda yz^2+abz^3,\, xyz-yz^2,\,y^2z-yz^2),\\
\nonumber I(\X_9)&=&(x^2y-xy^2,\, x^2z-axz^2,\,  xyz\ , \ y^2z-byz^2),\\
\nonumber I(\X_{10})&=&(x^2y-xy^2-abyz^2,\, x^2z-(a+b)xz^2+abz^3,\, xyz-ayz^2,\, y^2z+(b-a)yz^2),
\end{eqnarray}
where $\lambda=-ab+a+b-1$ in $I(\X_8)$. 

As the same argument for points in general linear position (Proposition~\ref{VVGLP(d+1)2}), we can find $15$ linearly independent forms of degree $4$ among $18$ non-zero minors of the Jacobian matrix of the defining ideals. Thus $I_2(\Theta)=(x,y,z)^4$ which proves the assertion.
\end{proof}

\end{document}